\documentclass{article}

\usepackage[a4paper, left=3cm, right=3cm, top=3cm, bottom=3cm, bindingoffset=5mm]{geometry}

\usepackage{mathtools, amssymb, amsthm, upgreek, stackengine}

\usepackage{microtype} 
\usepackage{enumitem, graphicx, caption, float, xcolor, booktabs}

\usepackage{hyperref}
\hypersetup{
    colorlinks=false, 
    linktoc=all,     
    linkcolor=blue,  
}

\newlength\tindent
\numberwithin{equation}{section}

\newtheorem{theorem}{Theorem}[section]
\newtheorem{lemma}[theorem]{Lemma}

\newtheorem{proposition}[theorem]{Proposition}
\newtheorem{assumption}[theorem]{Assumption}
\newtheorem{example}[theorem]{Example}

\newcommand{\R}{\mathbb{R}}
\newcommand{\Z}{\mathbb{Z}}
\newcommand{\N}{\mathbb{N}}

\newcommand{\eps}{\varepsilon}
\newcommand{\defg}{\coloneqq}
\newcommand{\cB}{\mathcal{B}}
\newcommand{\cC}{\mathcal{C}}
\newcommand{\cE}{\mathcal{E}}

\newcommand{\cQ}{\mathcal{Q}}
\newcommand{\cT}{\mathcal{T}}
\newcommand{\cR}{\mathcal{R}}
\newcommand{\transpose}{{\rm T}}

\newcommand{\intervalc}[2]{\textup{[}\hspace{1pt}#1, #2\hspace{1pt}\textup{]}}

\newcommand{\wt}[1]{\widetilde{#1}}
\newcommand{\comment}[1]{}

\definecolor{DarkBlue}{RGB}{14, 13, 117}

\definecolor{DarkRed}{RGB}{130, 7, 7}

\title{Improved Convergence of Multilevel Moving Least-Squares Approximation\thanks{The work has been supported by the DFG
  under  project no. 514588180.}}
\author{Robert Durst\\ Department of Mathematics\\ University of
  Bayreuth\\ 95440 Bayreuth \\ Germany\\robert.durst@uni-bayreuth.de
\and Holger Wendland \\ Department of Mathematics\\ University of
  Bayreuth\\ 95440 Bayreuth \\ Germany\\holger.wendland@uni-bayreuth.de}

\begin{document}

\maketitle
\begin{abstract}
  Moving least-squares approximation is a popular method for
  approximating  multivariate functions from given discrete data. For
  higher accuracy higher 
  degree polynomials have to be used, resulting also in higher
  computational cost and numerical instabilities. Recently, the
  combination of low-order moving least squares with a multilevel
  scheme showed superior numerical behavior. In this paper we will
  prove, amongst other things,  that such a combination of  moving
  least-squares with a multilevel scheme indeed leads to improved
  convergence results, at least if the data sites form a regular
  grid.  
  \end{abstract}


\section{Introduction}

The approximation or learning of an unknown function from given, discrete
data is one of the core problems in approximation and learning
theory. Aside from neural networks, the most popular and
mathematically best understood methods comprise
kernel-based learning (\cite{Schoelkopf-Smola-02-1,
  Cristianini-Shawe-Taylor-00-1, Steinwart-Christmann-08-1}), radial
basis function approximation (\cite{Fasshauer-07-1,Fornberg-Flyer-15-1,
  Wendland-05-1}) and moving least-squares (MLS) approximation 
(\cite{Farwig-86-1,Lancaster-Salkauskas-81-1,Levin-98-1,Wendland-01-1}). The
latter method particularly has gained popularity in computer-aided design
applications (\cite{Alexa-etal-01-1,Alexa-etal-03-1}). Theoretically,
the approximation properties of MLS approximation, as well as its drawbacks, are well
established, see for example \cite{Levin-98-1,
  Melenk-05-1,Mirzaei-15-1,Wendland-01-1}.

More precisely, let $d \in\N$ be an integer denoting the spatial dimension. We
consider the problem of approximating an unknown function $f: \R^d \to
\R$ from its values on a discrete set of sample sites $X = \{x_j : j
\in J\}$. These samples $(x_j, f(x_j))$ typically represent
measurements of physical quantities, biological observations, or
outputs from numerical simulations.  MLS approximation constructs an 
approximant $Q_X f$ that reproduces polynomials of degree $r \in
\N_0$; that is, for all $p \in \pi_r(\R^d)$,  
\[
Q_X p(x) = p(x), \quad x \in \R^d.
\]
In theory, the accuracy of MLS can be improved by increasing the
reproduction degree $r$. In practice, however, $r$ is usually
restricted to $\{0, 1, 2\}$ primarily for two reasons. First, every
evaluation of the MLS approximant requires solving a dense, symmetric
linear system of size $M \times M$, where $M = \dim(\pi_r(\R^d)) =
\binom{d+r}{d}$. Consequently, high reproduction degrees lead to
computational overhead. Second, these systems become increasingly
ill-conditioned as $r$ increases.  

It is therefore desirable to enhance the accuracy of MLS schemes with
low reproduction degrees without sacrificing numerical stability. To
achieve this, we analyze a multiscale scheme based on the multilevel
error-correction method introduced in \cite{Sharon-etal-23-1} (cf. Algorithm
4.1 therein), which has previously successfully been used in the context of
radial basis function interpolation and approximation, see for example
\cite{Floater-Iske-96-1, LeGia-etal-10-1, Wendland-10-1,
  Wendland-17-1}. The authors of \cite{Sharon-etal-23-1} demonstrated
that, under some additional assumptions, this multilevel 
method converges at a rate at least equal to the classical MLS
approach. However, their numerical experiments suggested that
significantly higher convergence rates are achievable. This work
substantiates that conjecture by obtaining improved convergence rates
for cases where the sample sites are structured as uniform grids. 

Furthermore, this work extends convergence results for standard MLS
and, in a certain way, the results in
\cite{Franz-Wendland-22-1}, which analyzed a similar grid-based
multilevel approach for kernel-based 
quasi-interpolation without polynomial reproduction. In that context,
it was shown that a non-converging quasi-interpolation method could be
transformed into a convergent scheme through multilevel error
correction.

The main results of this paper can be summarized as follows.
\begin{enumerate}
\item In the case of an even polynomial reproduction degree $r$, the
  expected convergence order of $r+1$ can be raised to $r+2$. To be
  more precise, we derive an approximate approximation theory result
  in the spirit of \cite{Mazya-Schmidt-07-1} but in a different
  setting and  using completely
  different techniques, see Theorem \ref{thm:classic_error_improved}.
\item The multilevel MLS approximation method significantly improves
  the simultaneous approximation results of standard MLS, see Theorem
  \ref{thm:result} and the discussion directly afterwards. On the one hand
  we are able to derive faster convergence for approximating the function
  and all its derivatives up to order $r$, reaching almost twice the
  order of standard MLS, with an additional speedup being obtained in
  the pre-asymptotic regime. On the other hand, we can extend the
  convergence result to derivatives of order $r+1$ instead of only $r$.
\end{enumerate}

This paper is organized as follows. In the next section, we will
recall the classical MLS framework and show 
improved results tailored to our specific situation. In
Section~\ref{sec:MLS} we will discuss the multilevel 
version of the MLS approximation and prove our main result on
its convergence. The final section is 
devoted to further numerical evidence. 

We end this section by stating our primary assumptions. In the
following, the standard Euclidean norm on $\R^d$ is denoted by
$\|\cdot\|$. 

\begin{assumption}\label{general_assumption}
Throughout this work, $d, k, m \in \N$ and $r \in \N_0$ are constants that
are used in the following way. As usual, $d$ denotes the space
dimension. The target functions $f$ belongs to $C^k(\R^d)$, and the
MLS approximation procedure is determined by a non-negative, non-zero
kernel $\Phi \in  C^{m}(\R^d)$ with compact support in the unit ball,
reproducing polynomials of degree at most $r$. The space of these
polynomials is denoted by $\pi_r(\R^d)$ and equipped with a basis
$\cB:=\{p_1,\dots,p_M\}$, where $M=\dim(\pi_r(\R^d))$. We assume that
the parameters satisfy  
\[
m > k > r.
\]
\end{assumption}

The constants occurring in our analysis will frequently depend on the
tuple $\cT = (\Phi,d,m,r,\cB)$ as well as the smoothness parameter
$k$, which we consider fixed throughout this paper unless stated
otherwise. 


\section{Classical MLS}\label{sec:classic_mls}
In this section we collect and prove necessary results on the
classical MLS method. We mainly follow standard procedure. However,
classically, the  MLS method uses a finite number of data, where the
data sites are scattered in a bounded domain $\Omega\subseteq\R^d$. In
this paper, we will assume that the data sites form a regular infinite
grid. This leads to subtle differences, particular when it comes to
studying the approximation properties over a bounded domain, as we
will now also use points outside the domain.  

For each point $x \in \R^d$, the MLS approximant is evaluated by
solving a linear equation system involving the so-called shape matrix
$A(x)$. In Subsection~\ref{sec:A_x}, we derive estimates on the
spectrum of $A(x)^{-1}$ and its element-wise derivatives with respect
to $x$. These results will benefit from the regular structure of the
data points and will be the driving force behind the convergence
speedup of both the classical and the multilevel method. 

In Subsection~\ref{sec:superconvergence}, we give a convergence proof for
the classical multilevel method. Afterwards, we show that due to the
special structure of our point set, an additional convergence order
can be gained if the polynomial reproduction degree $r$ is even. 

We will now commence by introducing the classical
MLS method. Let $h>0$ and let 
$f:\R^d\to\R$ be a function that is only known on the grid 
$ X_h := h\Z^d$. Then,  for $x\in\R^d$, the MLS approximant $Q_h
f:\R^d\to\R$ is defined as 
$ Q_h f(x) := p^*(x)$,  where $p^*=p_x^*$ is the solution of the minimization problem
\begin{equation}\label{mls_problem}
\min\left\{
\sum_{q\in\Z^d}\Phi\left(\frac{x-hq}{\delta}\right)\left|f(hq)-p(hq)\right|^2
: p\in\pi_r(\R^d) \right\},
\end{equation}
provided that a unique solution exists for all $x\in\R^d$. Here,
$\delta := \nu h>0$ is called the \emph{support radius}, since the
function  $\Phi(\cdot/\delta)$ is supported in a ball of radius $\delta$. The parameter
$\nu>1$ describes the ratio between the support radius and the mesh size.
The corresponding error operator is defined by
\[
E_h f := f - Q_h f.
\]

We now discuss conditions under which the MLS approximant $Q_h f$ is
well defined. Instead of using the basis $\cB = \{p_1,\ldots,p_M\}$ from
Assumption~\ref{general_assumption} directly, we will use the
$x$-dependent basis  
\[
\left\{p_j\left(\frac{x-\cdot}{\delta}\right) : 1\le j\le M\right\}
\]
in the  minimization problem (\ref{mls_problem}).
Expressing an arbitrary polynomial $p\in\pi_r(\R^d)$ in this basis as
$p=p_v = \sum_{j=1}^M v_j p_j((x-\cdot)/\delta)$ allows us to rewrite
the term in (\ref{mls_problem}) to be minimized as
\[
  \sum_{q\in\Z^d}\Phi\left(\frac{x-hq}{\delta}\right)\left|f(hq)-\sum_{j=1}^M
  v_j p_j\left(\frac{x-hq}{\delta}\right)\right|^2  = v^\transpose A(x)v - 2v^\transpose b(x) + c(x),
\]
where $b(x) := (b_1(x),\dots,b_M(x))^\transpose$ and $c(x)$ are given by
\[
b_j(x) := \sum_{q\in\Z^d}\Phi\biggl(\frac{x-hq}{\delta}\biggr)p_j\biggl(\frac{x-hq}{\delta}\biggr)f(hq),
\qquad
c(x) := \sum_{q\in\Z^d} \Phi\biggl(\frac{x-hq}{\delta}\biggr) |f(hq)|^2
\]
and the matrix $A(x)\in\R^{M\times M}$ has entries
\begin{equation}\label{matrixA}
a_{ij}(x) = \sum_{q\in\Z^d}
\Phi\left(\frac{x-hq}{\delta}\right)p_i\left(\frac{x-hq}{\delta}\right)p_j
\left(\frac{x-hq}{\delta}\right).
\end{equation}
The matrix $A(x)$ is obviously symmetric and positive
semi-definite. If $A(x)$ is even positive definite then we have a
unique solution $p_{v^*}$, where $v^*$ is given by $v^*=A(x)^{-1}
b(x)$ and the MLS approximant takes the form
\[
Q_hf(x) = p_{v^*}(x) = \sum_{j=1}^M v_j^* p_j(0) = \omega_0^\transpose
v^* = \omega_0^\transpose A(x)^{-1} b(x)
\]
with 
$ \omega_0 := \left(p_1(0),\ldots,p_M(0)\right)^\transpose \in \R^M$.
This essentially proves the following proposition.

\begin{proposition}
Suppose that $A(x)$ is positive definite for all $x\in\R^d$, and define
\[
\lambda(x) := A(x)^{-1}\omega_0 = A(x)^{-1}\left(p_1(0),\ldots p_M(0)\right)^{\transpose}.
\]
Then the minimization problem~\eqref{mls_problem} admits a unique solution and $Q_h$ is a linear operator which can be written in the form 
\[
Q_h f(x) = \sum_{q\in\Z^d} w_\delta(hq,x)\,f(hq),
\]
where
\begin{equation}\label{def:weightfct}
w_\delta(y,x)
:=
\Phi\biggl(\frac{x-y}{\delta}\biggr)\sum_{j=1}^M \lambda_j(x)\,p_j\biggl(\frac{x-y}{\delta}\biggr),
\qquad x,y\in\R^d.
\end{equation}
The function $w_\delta$ does not depend on the choice of the basis $\cB$. Moreover, $Q_h$ reproduces polynomials of degree $r$, i.e. 
$Q_hp = p$ for all $p\in\pi_r(\R^d)$.
\end{proposition}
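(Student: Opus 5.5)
The plan is to make the quadratic-form computation above rigorous, then read off linearity and the weight representation, and finally settle basis independence and polynomial reproduction with two short algebraic arguments.

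\emph{Existence, uniqueness and the formula for $Q_h f$.} For fixed $x$, only finitely many $q$ satisfy $\|x-hq\|<\delta$, because $\Phi$ is supported in the unit ball. Hence all the sums defining $A(x)$, $b(x)$ and $c(x)$ are finite. The objective equals $F(v)=v^\transpose A(x)v-2v^\transpose b(x)+c(x)$, and the map $v\mapsto p_v$ is a bijection onto $\pi_r(\R^d)$, since $\{p_j((x-\cdot)/\delta)\}$ is again a basis of $\pi_r(\R^d)$ (it is obtained by an invertible affine change of variables). When $A(x)$ is positive definite, $F$ is strictly convex. Its unique minimizer is the critical point $v^*=A(x)^{-1}b(x)$, so the minimizer $p^*$ of \eqref{mls_problem} is unique.

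Evaluating at the point $x$ gives $p_{v^*}(x)=\omega_0^\transpose v^*$. Since $A(x)$ is symmetric, $\omega_0^\transpose A(x)^{-1}b(x)=\lambda(x)^\transpose b(x)$. Substituting the definition of $b_j(x)$ and exchanging the finite sums produces exactly $\sum_q w_\delta(hq,x)f(hq)$. Linearity of $Q_h$ in $f$ is then immediate, because $b$ is linear in $f$ while $\lambda$ does not depend on $f$.

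\emph{Polynomial reproduction.} If $f=p\in\pi_r(\R^d)$, the objective in \eqref{mls_problem} vanishes at the admissible choice $p$ itself, and it is always nonnegative. By uniqueness, $p^*_x=p$ for every $x$, and therefore $Q_hp(x)=p(x)$.

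\emph{Independence of the basis.} This is the only slightly delicate point; I would handle it through the characterization of the minimizer rather than by matrix manipulation. The value $Q_hf(x)=p^*_x(x)$ is defined by the basis-free problem \eqref{mls_problem}, so it does not depend on $\cB$. The same holds for each coefficient $w_\delta(hq,x)$: take $f$ to be any function with $f(hq')=\delta_{qq'}$ on the grid. Only grid values enter, and these can be prescribed freely. This yields $w_\delta(hq,x)=Q_hf(x)$, which is basis-independent.

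For arbitrary $y$ (not only grid points) I would instead check directly how $w_\delta$ transforms under a change of basis. Let $\wt{\cB}$ be related to $\cB$ by $\wt p_i=\sum_j T_{ij}p_j$ with $T$ invertible. Then
\[
\wt A=TAT^\transpose,\qquad \wt\omega_0=T\omega_0,\qquad \wt\lambda=T^{-\transpose}\lambda .
\]
Consequently
\[
\sum_i\wt\lambda_i\,\wt p_i=\lambda^\transpose T^{-1}T\,p=\lambda^\transpose p ,
\]
so $w_\delta(y,x)$ is unchanged. I expect this bookkeeping with $T$ to be the only step that needs care; the rest is routine.
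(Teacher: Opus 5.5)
Your proof is correct. The derivation of the weight representation from the normal equations matches the paper. Your basis-independence argument does too: the paper only asserts that independence follows from
$w_\delta(y,x)=\Phi\bigl(\frac{x-y}{\delta}\bigr)\,\omega_0^\transpose A(x)^{-1}\bigl(p_1(\frac{x-y}{\delta}),\ldots,p_M(\frac{x-y}{\delta})\bigr)^\transpose$
together with the definition of $A(x)$. Your computation $\wt A=TAT^\transpose$, $\wt\omega_0=T\omega_0$, $\wt\lambda=T^{-\transpose}\lambda$ is exactly the bookkeeping behind that sentence. Your separate grid-point argument with $f(hq')=\delta_{qq'}$ is valid but redundant, since the $T$-computation already covers every $y\in\R^d$.

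The genuine difference is in polynomial reproduction. You argue variationally: the objective is nonnegative because $\Phi\ge 0$, it vanishes at $p$, and the minimizer is unique. The paper instead writes $p=\sum_i z_i(x)\,p_i((x-\cdot)/\delta)$ and computes $Q_hp(x)=z(x)^\transpose A(x)\lambda(x)=z(x)^\transpose\omega_0=p(x)$.

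Your route is shorter and gives slightly more: the local minimizer $p_x^*$ equals $p$ itself, not just at the point $x$. The paper's route uses only the identity $A(x)\lambda(x)=\omega_0$ and the weight formula. It therefore does not depend on $\Phi\ge 0$ or on the least-squares interpretation, and would remain valid whenever $A(x)$ is merely invertible. In the setting of the paper, where $\Phi\ge0$ and $A(x)$ is positive definite, both arguments are complete.
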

\begin{proof}
  The discussion before this proposition immediately shows
  \begin{align*}
  Q_hf(x) &= (A(x)^{-1} \omega_0)^\transpose b(x) =
  \lambda(x)^\transpose b(x) =\sum_{j=1}^M \lambda_j(x)
  \sum_{q\in\Z^d} \Phi\left(\frac{x-hq}{\delta}\right) p_j
  \left(\frac{x-hq}{\delta}\right) f(hq)\\
  & = \sum_{q\in\Z^d} w_\delta(hq,x) f(hq),
  \end{align*}
  which is the stated representation with the given weight functions
  from (\ref{def:weightfct}), showing also that $Q_h:C(\R^d)\to
  C(\R^d)$ is a linear map. The independence of $w_\delta$ from $\cB$ follows immediately from the representation
\[
w_\delta(y,x) = \Phi\biggl(\frac{x-y}{\delta}\biggr) \cdot (p_1(0),\ldots,p_M(0)) A(x)^{-1} \biggl( p_1\biggl(\frac{x-y}{\delta}\biggr),\ldots,p_M\biggl(\frac{x-y}{\delta}\biggr)\biggr)^\transpose
\]
and the definition of $A(x)$. It remains to prove the polynomial reproduction property.
Let $x\in\R^d$ and $p\in\pi_r(\R^d)$. Then there exists a vector $z(x)\in\R^M$
such that $ p = \sum_{i=1}^M z_i(x)\,p_i((x-\cdot)/{\delta})$. The
just proven representation of $Q_h$ and its linearity then yield
\begin{align*}
Q_h p(x)
&= \sum_{i=1}^M z_i(x)\, Q_h\!\left(p_i\biggl(\frac{x-\cdot}{\delta}\biggr)\right)(x) \\
&= \sum_{i=1}^M \sum_{j=1}^M z_i(x)\lambda_j(x) \sum_{q\in\Z^d}
\Phi\biggl(\frac{x-hq}{\delta}\biggr)
p_j\biggl(\frac{x-hq}{\delta}\biggr)
p_i\biggl(\frac{x-hq}{\delta}\biggr) \\
&= \sum_{i=1}^M \sum_{j=1}^M z_i(x)\lambda_j(x)a_{i j}(x) = z(x)^\transpose A(x)\lambda(x) \\
&= z(x)^\transpose A(x)A(x)^{-1}\omega_0 = z(x)^\transpose \omega_0 = p(x).
\end{align*}
This completes the proof.
\end{proof}

The function $w_\delta$ defined in~\eqref{def:weightfct} is referred to as the \emph{weight function}, and the matrix $A(x)$ as the \emph{shape matrix}.
For $q \in \Z^d$, we will occasionally use the shorthand notation
\[
w_q : \R^d \to \R, \qquad w_q(x) := w_\delta(hq,x),
\]
to simplify expressions. It follows immediately from their construction that the functions $w_q$ belong to $C^{m}(\R^d)$. Moreover, the weight functions satisfy the following family of identities, which will be relevant later on.

\begin{lemma}\label{lem:weightfct_mixed_derivatives}
Suppose that $A(x)$ is positive definite for every $x \in \R^d$. Then, for any
pair of multi-indices $\alpha,\beta \in \N_0^d$ with
$\alpha \neq \beta$, $|\alpha|\leq r$, and $|\beta|\leq m$,
it holds that
\begin{equation}\label{claim:weightfct_mixed_derivatives}
\sum_{q\in\Z^d} D^\beta w_q(x)\,(x-hq)^\alpha = 0
\end{equation}
for all $x \in \R^d$.
\end{lemma}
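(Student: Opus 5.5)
The plan is to use polynomial reproduction of $Q_h$ applied to a polynomial that depends on an auxiliary point, and then differentiate. Fix $\alpha$ with $|\alpha|\le r$ and an auxiliary point $z\in\R^d$, and consider $p_z(y):=(z-y)^\alpha\in\pi_r(\R^d)$. By the preceding proposition, $Q_h p_z = p_z$, so
\[
\sum_{q\in\Z^d} w_q(x)\,(z-hq)^\alpha = (z-x)^\alpha \qquad\text{for all } x,z\in\R^d .
\]
The sum is locally finite: by the compact support of $\Phi$, only the finitely many $q$ with $\|x-hq\|<\delta$ contribute, uniformly for $x$ in a neighbourhood. Hence we may differentiate term by term in $x$. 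The $w_q$ are $C^m$, so applying $D^\beta$ with $|\beta|\le m$ and keeping $z$ fixed gives
\[
\sum_{q\in\Z^d} D^\beta w_q(x)\,(z-hq)^\alpha = D_x^\beta (z-x)^\alpha .
\]
Now set $z=x$ after the differentiation. The left-hand side becomes exactly the sum in \eqref{claim:weightfct_mixed_derivatives}. On the right, $D_x^\beta(z-x)^\alpha$ vanishes identically unless $\beta\le\alpha$ componentwise. If $\beta\le\alpha$, it equals $(-1)^{|\beta|}\frac{\alpha!}{(\alpha-\beta)!}(z-x)^{\alpha-\beta}$. Since $\alpha\neq\beta$, the exponent $\alpha-\beta$ is nonzero, so this expression vanishes at $z=x$. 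In both cases the right-hand side is $0$, which proves the claim.

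The only point needing care, and the main obstacle, is justifying termwise differentiation of the infinite sum over $\Z^d$ and the order of operations. We differentiate in $x$ with $z$ frozen, and only afterwards substitute $z=x$. If instead one set $z=x$ first, the chain rule would introduce extra terms. Local finiteness handles the first issue. The second is handled by treating the identity as one in the two independent variables $(x,z)$. One should also note that the case $\beta=0$, $\alpha\neq0$ is included: it is simply $Q_h p_x(x)=p_x(x)=0$.
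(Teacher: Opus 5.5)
Your proof is correct, but it takes a different route from the paper. The paper argues by induction on $|\beta|$. It applies $D^\beta$ to the one-variable identity $\sum_{q\in\Z^d} w_q(x)(x-hq)^\alpha = 0$, which equals $1$ instead when $\alpha=0$. There $x$ enters both through $w_q$ and through the monomial, so the Leibniz rule produces cross terms $\sum_{q} D^{\beta-\gamma}w_q(x)\,(x-hq)^{\alpha-\gamma}$ for $0<\gamma\le\alpha,\beta$. The paper then kills these with the induction hypothesis, using $\alpha-\gamma\neq\beta-\gamma$.

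Your device is to freeze the centre as an independent variable $z$ and set $z=x$ only after differentiating. As you observe, setting $z=x$ first is exactly what creates those cross terms, so your argument removes them at the source. No induction or tracking of binomial coefficients is needed.

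What your route gains:
\begin{itemize}
\item It is shorter and non-inductive.
\item It gives the diagonal case for free: $\sum_{q} D^\alpha w_q(x)(x-hq)^\alpha=(-1)^{|\alpha|}\alpha!$.
\item It states explicitly the local finiteness that justifies termwise differentiation, which the paper uses tacitly.
\end{itemize}
The paper's version stays with a single-variable identity and the product rule. Both arguments rest on the same two facts: polynomial reproduction of $Q_h$ and $w_q\in C^m(\R^d)$.
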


\begin{proof}
We prove~\eqref{claim:weightfct_mixed_derivatives} by induction on
$|\beta |$ and start with the base case. If $|\beta |=0$ and
$\alpha\neq\beta$, the claim follows immediately from the polynomial
reproduction property $Q_hp=p$ for all $p\in\pi_r(\R^d)$. For the
induction step, assume that \eqref{claim:weightfct_mixed_derivatives}
holds for all multi-indices $\beta$ with 
$0 \leq |\beta |\leq s < m$, and let $|\beta |= s+1$. Then
\begin{align*}
0
&= D^\beta \biggl[ \sum_{q\in\Z^d} w_q(x)\,(x-hq)^\alpha \biggr] \\
&= \sum_{q\in\Z^d} D^\beta w_q(x)\,(x-hq)^\alpha
 + \sum_{0<\gamma\leq\beta} \binom{\beta}{\gamma}
   \sum_{q\in\Z^d} D^{\beta-\gamma} w_q(x)\,
   D^\gamma\!\bigl[(x-hq)^\alpha\big] \\
&= \sum_{q\in\Z^d} D^\beta w_q(x)\,(x-hq)^\alpha
 + \sum_{\substack{0<\gamma\leq\beta \\ \gamma\leq\alpha}}
   \binom{\beta}{\gamma} (\alpha-\gamma)!
   \sum_{q\in\Z^d} D^{\beta-\gamma} w_q(x)\,(x-hq)^{\alpha-\gamma}.
\end{align*}
Since $\alpha \neq \beta$, it follows that $\alpha-\gamma \neq \beta-\gamma$ for
all $0<\gamma\leq\alpha,\beta$. Therefore, the induction hypothesis implies
\[
\sum_{q\in\Z^d} D^{\beta-\gamma} w_q(x)\,(x-hq)^{\alpha-\gamma} = 0,
\]
which completes the proof.
\end{proof}

\subsection{Spectral Estimates}\label{sec:A_x}

As announced, we are going to investigate the properties of the shape
matrix $A(x)$ and its inverse. In particular, we focus on two 
aspects. First, we show that $A(x)$ is positive definite for
sufficiently large values of $\nu$, where $\nu=\delta/h$ is the
ratio between the support radius $\delta$ and the grid width $h$.
Second, we study the decay, as 
$\nu \to \infty$, of the spectral norms of the matrices 
\[
D^\gamma\bigl(A^{-1}(x)\bigr), \qquad \gamma \in \N_0^d,\quad 0 \leq |\gamma|\leq m,
\]
and estimate the corresponding decay rates. Here, $D^\gamma A(x)$ and
$D^\gamma\bigl(A^{-1}(x)\bigr)$ will denote the component-wise
derivatives with respect to $x$. Our approach is based on a classical
quadrature result going back to P.A. Raviart \cite{Raviart-85-1}. In
the following, we denote the $h$-cube centered at $hq$ with $q \in \Z^d$ by 
\[
V_{h,q} \defg \{hq\} + {\intervalc {-h\slash 2} {h\slash 2}}^d.
\]

\begin{proposition}\label{prop:rav_general}
Suppose $\ell \in \mathbb{N}_0$. Then there is a constant $C =
C(d,\ell)>0$ such that for all continuous functions $g \in
C_c^\ell(\R^d)$  with compact support we have 
\begin{equation}\label{claim:rav_general}
\biggl|\sum_{q\in\mathbb{Z}^d} g(hq) - \frac{1}{h^d}\int_{\R^d}
g(y)\,dy \biggr|\leq C h^{\ell}
\sum_{q\in\mathbb{Z}^d}|g|_{W^{\ell,\infty}(V_{h,q})}. 
\end{equation}
\end{proposition}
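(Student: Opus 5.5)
The plan is to decompose the error cell by cell and exploit the symmetry of the cube through a Bramble--Hilbert type argument on each cell. Since the cubes $V_{h,q}$, $q\in\Z^d$, tile $\R^d$ up to measure-zero overlaps, we can write
\[
\sum_{q\in\Z^d} g(hq) - \frac{1}{h^d}\int_{\R^d} g(y)\,dy = \sum_{q\in\Z^d} \frac{1}{h^d}\int_{V_{h,q}} \bigl(g(hq)-g(y)\bigr)\,dy ,
\]
which is a finite sum because $g$ has compact support. It therefore suffices to prove the local estimate
\[
\biggl|\, g(hq) - \frac{1}{h^d}\int_{V_{h,q}} g(y)\,dy\,\biggr| \le C h^{\ell}\,|g|_{W^{\ell,\infty}(V_{h,q})}
\]
for each cell separately, with $C$ depending only on $d$ and $\ell$. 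Summing over $q$ then gives the claim.

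\textbf{Reduction to a reference cell.} We rescale each cell to the reference cube $V=[-1/2,1/2]^d$ by setting $\hat g(t) := g(hq+ht)$. Then $|\hat g|_{W^{\ell,\infty}(V)} = h^\ell |g|_{W^{\ell,\infty}(V_{h,q})}$, so the local estimate becomes
\[
\bigl|L(\hat g)\bigr| \le C\,|\hat g|_{W^{\ell,\infty}(V)}, \qquad L(u) := u(0) - \int_V u(t)\,dt .
\]
This is scale-free.

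\textbf{Key step (local estimate).} The functional $L$ annihilates constants, so for $\ell\ge1$ we expand $\hat g$ by Taylor's formula about the centre $0$ to order $\ell-1$:
\[
\hat g(t) = \sum_{|\alpha|<\ell} \frac{D^\alpha \hat g(0)}{\alpha!}\, t^\alpha + R(t), \qquad |R(t)| \le C\,\|t\|^\ell\,|\hat g|_{W^{\ell,\infty}(V)} .
\]
The convexity of $V$ makes the integral form of the remainder valid. $L$ kills the constant term because $\int_V dt = 1$. It also kills every odd monomial $t^\alpha$ (some $\alpha_i$ odd), since $V$ is symmetric. The even monomials with $0<|\alpha|<\ell$ are \emph{not} killed by $L$, and this is the main obstacle: the plain Taylor argument only gives the claim for $\ell\le 2$.

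\textbf{Handling the obstacle.} As stated for arbitrary $\ell$ with the full sum on the right-hand side, I would try to absorb the even-monomial contributions after summation over $q$. Each such term is $c_\alpha h^{|\alpha|}D^\alpha g(hq)$, with $c_\alpha := \int_V t^\alpha\,dt$. Summed over $q$, it is again a lattice sum, now of the compactly supported function $D^\alpha g$. Applying the statement inductively in $\ell$ (Euler--Maclaurin style), we have $\int D^\alpha g = 0$, so
\[
h^{|\alpha|}\sum_{q} D^\alpha g(hq) = h^{|\alpha|}\Bigl(\sum_q D^\alpha g(hq) - h^{-d}\!\int D^\alpha g\Bigr),
\]
and the inductive bound with order $\ell-|\alpha|$ gives $h^{|\alpha|}\cdot h^{\ell-|\alpha|}\sum_q |D^\alpha g|_{W^{\ell-|\alpha|,\infty}(V_{h,q})}\le h^\ell\sum_q|g|_{W^{\ell,\infty}(V_{h,q})}$, matching the right-hand side up to the $h^{-d}$ normalisation, which should be carried consistently throughout the induction. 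The remainder $R$ is bounded directly since $\|t\|\le\sqrt d/2$ on $V$. So the proof is an induction on $\ell$: the base cases $\ell=0,1,2$ are the direct Taylor bound, and the inductive step uses the global cancellation $\int D^\alpha g = 0$ to control the even moments. The delicate point is bookkeeping the constants so that they depend only on $d$ and $\ell$.
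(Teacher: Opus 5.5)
Your argument is correct, and it follows a genuinely different route from the paper. The paper gives no proof of its own. It invokes Lemmas 3.2, 3.3 and Theorem 3.1 of Raviart, asserts that they carry over to the case $p=\infty$, $q=1$, and gets a constant from an abstract Bramble--Hilbert argument, which is therefore not explicit.

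Your proof is elementary and self-contained. You tile $\R^d$ by the cubes $V_{h,q}$ and take a centred Taylor expansion of order $\ell-1$ on each cell. Symmetry of the cube kills every moment $c_\alpha=\int_{[-1/2,1/2]^d}t^\alpha\,dt$ with some $\alpha_i$ odd. Strong induction on $\ell$ then disposes of the surviving even moments through $\int_{\R^d}D^\alpha g\,dy=0$. This delivers the explicit recursion
\[
C_\ell = C_{\mathrm{rem}}(d,\ell)+\sum_{\substack{0<|\alpha|<\ell\\ \alpha\in 2\N_0^d}}\frac{c_\alpha}{\alpha!}\,C_{\ell-|\alpha|},
\]
where $\ell-|\alpha|\in\{1,\dots,\ell-2\}$, so the base cases $\ell=1,2$ suffice. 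That is exactly the explicit constant the paper says it lacks. The citation buys the paper only brevity, at the price of asking the reader to accept that Raviart's proofs adapt to this mixed norm.

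Two presentational fixes are needed.

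First, drop the opening claim that it ``suffices'' to prove the per-cell estimate. For $\ell\ge 3$ that estimate is not just beyond Taylor's formula but false. Take $g\in C_c^\infty(\R^d)$ equal to $(y_1-hq_1)^2$ near $V_{h,q}$. Then $g(hq)-h^{-d}\int_{V_{h,q}}g\,dy=-h^2/12$, while $|g|_{W^{3,\infty}(V_{h,q})}=0$. So the cancellation is genuinely global, and the induction is the proof rather than a patch.

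Second, there is no $h^{-d}$ normalisation to carry. Since $D^\alpha g\in C_c^{\ell-|\alpha|}(\R^d)$ has vanishing integral, the induction hypothesis gives directly
\[
\biggl|\sum_{q\in\Z^d}D^\alpha g(hq)\biggr|\le C_{\ell-|\alpha|}\,h^{\ell-|\alpha|}\sum_{q\in\Z^d}|g|_{W^{\ell,\infty}(V_{h,q})}.
\]
Multiplying by $h^{|\alpha|}$ matches the right-hand side exactly. Likewise, the constant bookkeeping you call delicate is just the recursion above.
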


Note that the results in \cite{Raviart-85-1}, in particular Lemma 3.2,
Lemma 3.3 and Theorem 3.1, all hold in the case of  $p = \infty$ and $q = 1$,
even if this fact is not explicitly mentioned there. The proofs for
this case are essentially the same. However, we do not have an
explicit bound for the constant $C$, as it stems from an abstract
argument involving the Bramble-Hilbert lemma. To shorten some of our
arguments, we formulate a specialized version of
Proposition~\ref{prop:rav_general} tailored to our needs. 

\begin{proposition}\label{prop:rav}
Suppose $\ell \in \mathbb{N}_0$. Then there is a constant $C^* =
C^*(d,\ell)>0$ such that for all functions $g \in C_c^\ell(\R^d)$ which
are compactly supported in a ball of radius $\delta = \nu h$, we have 
\begin{equation}\label{claim:rav}
\biggl| \sum_{q\in\mathbb{Z}^d} g(hq) - \frac{1}{h^d}\int_{\R^d}
g(y)\,dy \biggr|\leq C^* h^{\ell} \nu^d |g|_{W^{\ell,\infty}(\R^d)}. 
\end{equation}
\begin{proof}
This is an obvious consequence of Proposition~\ref{prop:rav_general}
and the fact that the number of cubes $V_{h,q}$ which intersect any
ball $B_\delta(x)$, $x \in \R^d$, is bounded by a constant
proportional to $\nu^d$. 
\end{proof}
\end{proposition}

 We will now apply this classic result to estimate the spectral norms
 of $A(x)$ and its derivatives. As announced, the following lemma also
 shows that $A(x)$ is positive definite for all $x \in \mathbb{R}^d$,
 provided that $\nu$ is chosen sufficiently large. We denote the
 spectral norm by $\Vert\cdot\Vert_2$. 

\begin{lemma}\label{lem:Ax_spec_est}
Let $\gamma \in \mathbb{N}_0^d$ with $|\gamma|\leq m$. Then there
exist constants $C_\gamma = C_\gamma(\cT)>0$ and $\tau = \tau(\cT) > 0$
such that for all $x \in \mathbb{R}^d$, 
\begin{equation}
\label{claim:Ax_spec_est_1}
\Vert D^\gamma A(x) \Vert_2 \leq C_\gamma\, \delta^{-|\gamma|}\,
\nu^{d+|\gamma|-m},\qquad \gamma\not= 0, 
\end{equation}
and
\begin{equation}
\label{claim:Ax_spec_est_2}
\lambda_{\mathrm{min}}(A(x)) \geq \nu^d\big(\tau - C_0 \nu^{-m}\bigr).
\end{equation}
\begin{proof}
Fix $x \in \R^d$. To simplify our notation, we introduce the two auxiliary functions
\[
\Phi_\alpha(y) := \Phi(y)\,y^\alpha \quad \text{and} \quad \Psi_\alpha(y) \defg \Phi\left(\frac{x-y}{\delta}\right)\left(\frac{x-y}{\delta}\right)^\alpha
\]
for $y \in \R^d$ and $\alpha \in \mathbb{N}_0^d$. Furthermore, for $v
\in \R^M$ with $\Vert v\Vert = 1$ we define $p_v \coloneqq
\sum_{j=1}^M v_j p_j$ to be the polynomial represented by the
coefficients of $v$ in the basis $\cB$ from
Assumption~\ref{general_assumption}. Since $p_v^2$ is a polynomial of
degree at most $2r$, there exist polynomials 
$c_\alpha \in \pi_2(\R^M)$ such that
\[
p_v^2(y) = \sum_{|\alpha|\leq 2r} c_\alpha(v)\, y^\alpha.
\]
Using the definition of $A(x)$ from (\ref{matrixA}), for every multi-index $|\gamma|\leq m$ we obtain
\[
v^\top D^\gamma A(x) v = (-1)^{|\gamma|} \sum_{|\alpha|\leq 2r}
c_\alpha(v) \sum_{q \in \mathbb{Z}^d} D^\gamma \Psi_\alpha(hq). 
\]
Since $\Phi$ is compactly supported, the integral of $D^\gamma
\Psi_\alpha$ over $\R^d$ vanishes in the case of $\gamma \not=
0$. Furthermore, $D^\alpha\Psi_\alpha$ satisfies the prerequisites of
Proposition~\ref{prop:rav} with $\ell = m -|\gamma|$, yielding 
\begin{align*}
\biggl|\sum_{q \in \Z^d} D^\gamma \Psi_\alpha(hq) \biggr|
\leq  
C^* \nu^{d} h^{m-|\gamma|} |\Psi_\alpha |_{W^{m,\infty}(\R^d)} = C^*
\nu^{d-m} h^{-|\gamma|} |\Phi_\alpha |_{W^{m,\infty}(\R^d)}. 
\end{align*}
This shows that for every $\gamma \not= 0$ there exits a constant
$C_{\gamma}' = C_{\gamma}'(\cT)>0$ such that 
\[
|v^\top D^\gamma A(x) v|\leq C_\gamma'\, \nu^{d-m} h^{-|\gamma|}\sum_{|\alpha|\leq 2r} |c_\alpha(v) |.
\]
The sum appearing on the right hand side is continuous in $v$ and
therefore bounded on the unit sphere,
proving~\eqref{claim:Ax_spec_est_1}. In the case of $\gamma=0$, we use
the triangle inequality and the fact that $A(x)$ is positive
semi-definite to see that 
\begin{align*}
&v^T A(x) v = |v^T A(x) v|= 
\\ &\quad= \biggl|\frac{1}{h^d}\int_{\R^d}
\Phi\left(\frac{x-y}{\delta}\right)p_v^2\left(\frac{x-y}{\delta}\right)\,dy
+ \sum_{|\alpha|\leq 2r} c_\alpha(v) \biggl(\sum_{q\in\Z^d}
\Psi_\alpha(hq) - \frac{1}{h^d}\int_{\R^d}
\Psi_\alpha(y)\,dy\biggr)\biggr| 
\\ &\quad\geq \biggl|\frac{1}{h^d}\int_{\R^d}
\Phi\left(\frac{x-y}{\delta}\right)p_v^2\left(\frac{x-y}{\delta}\right)\,dy
\biggr|- \biggl|\sum_{|\alpha|\leq 2r} c_\alpha(v)
\biggl(\sum_{q\in\Z^d} \Psi_\alpha(hq) - \frac{1}{h^d}\int_{\R^d}
\Psi_\alpha(y)\,dy\biggr) \biggr|\\ &\quad \geq \nu^d \min_{\Vert
  v\Vert = 1} \Vert \Phi p_v^2 \Vert_{L^1(\R^d)} -
\biggl(\sum_{|\alpha|\leq 2r} |c_\alpha(v)|\biggr) \max_{|\alpha|\leq
  2r} \biggl|\sum_{q\in\Z^d} \Psi_\alpha(hq) -
\frac{1}{h^d}\int_{\R^d} \Psi_\alpha(y)\,dy \biggr| 
\\ &\quad\geq \nu^d \min_{\Vert v\Vert = 1} \Vert \Phi p_v^2
\Vert_{L^1(\R^d)} - C^* \nu^{d-m} \biggl(\sum_{|\alpha|\leq 2r}
|c_\alpha(v)|\biggr) \max_{|\alpha|\leq 2r}
|\Phi_\alpha|_{W^{m,\infty}(\R^d)}, 
\end{align*}
where we used Proposition~\ref{prop:rav} once more in the last
step. The sum over all $|c_\alpha |$ is continuous, and so is the
function $v \mapsto \Vert \Phi p_v^2 \Vert_{L^1(\R^d)}$. Hence both
maps are bounded from above and below on the unit sphere, the latter
attaining a positive minimum. This shows~\eqref{claim:Ax_spec_est_2}. 
\end{proof}
\end{lemma}

In particular, the previous lemma shows that there exists $\nu_* = \nu_*(\cT) > 0$ such that
for all $\nu \geq \nu_*$ and all $x \in \R^d$,
\begin{equation}
\label{Ax_smallest_eval}
\lambda_{\mathrm{min}}(A(x))
\geq \tau_*\, \nu^d,
\qquad
\tau_* := \frac{\tau}{2}.
\end{equation}
Hence, $A(x)$ is positive definite for all sufficiently large $\nu$.
We now turn to estimates for the norm of the inverse matrix and its derivatives.

\begin{lemma}\label{lem:Ax_inv_spec_est}
Suppose $\nu \geq \nu_*$. Then
\begin{align}\label{claim:Ax_inv_spec_est_1}
\Vert A(x)^{-1} \Vert_2 \leq \tau_*^{-1}\,\nu^{-d}.
\end{align}
Also, for each $\gamma \in \mathbb{N}_0^d$ with $0 < |\gamma|\leq m$ there exists a constant
$\wt{C}_{\gamma} = \wt{C}_{\gamma}(\cT) > 0$
such that
\begin{align}\label{claim:Ax_inv_spec_est_2}
\Vert D^\gamma (A(x)^{-1}) \Vert_2
\leq \wt{C}_{\gamma}\,\delta^{-|\gamma|}\,\nu^{|\gamma|-m-d}
\end{align}
for all $x \in \R^d$.
\begin{proof}
Estimate~\eqref{claim:Ax_inv_spec_est_1} follows immediately from
\[
\Vert A(x)^{-1} \Vert_2 = \frac{1}{\lambda_{\textup{min}}(A(x))}
\]
together with~\eqref{Ax_smallest_eval}. We now prove \eqref{claim:Ax_inv_spec_est_2} by induction on $|\gamma|$. For any multi-index $0 < |\gamma|\leq m$, differentiation of the identity
$A(x)^{-1}A(x)=I$ yields
\[
0 = D^\gamma(A(x)^{-1}A(x))
= \sum_{\beta < \gamma} \binom{\gamma}{\beta}
D^\beta(A(x)^{-1})\,D^{\gamma-\beta}A(x) + D^\gamma(A(x)^{-1}) A(x),
\]
and hence
\[
D^\gamma(A(x)^{-1})
= -\sum_{\beta < \gamma} \binom{\gamma}{\beta}
D^\beta(A(x)^{-1})\,D^{\gamma-\beta}(A(x))\,A(x)^{-1}.
\]
For the base case $|\gamma|=1$, we have
\[
D^\gamma(A(x)^{-1}) = -A(x)^{-1}D^\gamma A(x)A(x)^{-1},
\]
and thus
\[
\Vert D^\gamma(A(x)^{-1}) \Vert_2
\leq \Vert A(x)^{-1} \Vert_2^2\,\Vert D^\gamma A(x) \Vert_2
\leq \tau_*^{-2} C_{\gamma}\,\delta^{-|\gamma|}\,
\nu^{|\gamma|-m-d}.
\]
We continue with the induction step. Assume that
\eqref{claim:Ax_inv_spec_est_2} holds for all multi-indices with 
$0<|\gamma|\leq s < m$, and let $|\gamma|= s+1$. Then
\begin{align*}
\Vert D^\gamma(A(x)^{-1}) \Vert_2
&\leq \Vert A(x)^{-1} \Vert_2^2\,\Vert D^\gamma A(x) \Vert_2 \\
&\quad + \sum_{0<\beta<\gamma} \binom{\gamma}{\beta} 
\Vert D^\beta(A(x)^{-1}) \Vert_2\,
\Vert D^{\gamma-\beta}A(x) \Vert_2\,
\Vert A(x)^{-1} \Vert_2 \\
&\leq \tau_*^{-2} C_{\gamma}\,\delta^{-|\gamma|}\,
\nu^{|\gamma|-m-d} \\
&\quad + \sum_{0<\beta<\gamma} \binom{\gamma}{\beta}
\wt{C}_{\beta} C_{\gamma-\beta}\tau_*^{-1}\,
\delta^{-|\gamma|}\,\nu^{|\gamma|-2m-d} \\
&= \biggl(
\tau_*^{-2} C_{\gamma}
+ \sum_{0<\beta<\gamma} \binom{\gamma}{\beta}
\wt{C}_{\beta} C_{\gamma-\beta}\tau_*^{-1}\nu^{-m}
\biggr)
\delta^{-|\gamma|}\,\nu^{|\gamma|-m-d}.
\end{align*}
This completes the proof.
\end{proof}
\end{lemma}
We will use the last lemma particularly for bounding
$\lambda(x)=A(x)^{-1}\omega_0$. To be more precise, it immediately
follows that we have for all $x\in\R^d$,
\begin{equation}\label{lambdabound1}
  \|\lambda(x)\| \le \tau_*^{-1}  \nu^{-d} \|\omega_0\|,
  \end{equation}
and, for $0<|\gamma|\le m$,
\begin{equation}\label{lambdabound2}
  \|D^\gamma\lambda(x)\| \le \widetilde{C}_\gamma \delta^{-|\gamma|}
  \nu^{|\gamma|-m-d}\|\omega_0\|.
    \end{equation}
\subsection{Convergence and Super-convergence}\label{sec:superconvergence}

If polynomials of degree $r \in \N_0$ are reproduced, classical
MLS converges with a rate of $r+1$ provided that the target function
$f : \R^d \to \R$ lies in $C^{r+1}(\R^d)$. To prove this well-known
fact in our setting, let us introduce some notation. Let $\cR_r
f:\R^d\times\R^d\to\R$ denote the Taylor remainder of order $r+1$,
defined by 
\begin{equation}\label{taylor_residue_basic}
\cR_r f(y,x) := f(y) - \sum_{|\alpha|\leq r} \frac{D^\alpha f(x)}{\alpha!}\,(y-x)^\alpha
\end{equation}
for all $x,y \in \R^d$. Equivalently, $\cR_r f$ admits the integral representation
\begin{equation}\label{taylor_residue_integral}
\cR_r f(y,x) = \sum_{|\alpha|=r+1}\frac{r+1}{\alpha!}\int_0^1 (1-t)^{r}(y-x)^\alpha D^\alpha f(x+t(y-x))\,dt.
\end{equation}
Both representations will be used in the remainder of this paper. 

\begin{theorem}\label{thm:classic_error}
Suppose $\nu \geq \nu_*$ and $f \in C^\ell(\R^d)$. Then there exists a
constant $C = C(\cT)>0$, independent of $\nu$, such that the error
$E_hf = f-Q_hf$ satisfies the bound
\[
|E_h f(x) |\leq C\,\delta^{\min\{r+1,\ell\}}\, |f |_{W^{\min\{r+1,\ell\},\infty}(B_\delta(x))}
\]
for all $x \in \R^d$.
\begin{proof}
Fix $x \in \R^d$. We only show the case $\ell \geq r+1$ to simplify notation, since the argument for $\ell < r+1$ works analogously. By the polynomial reproduction property of MLS,
\begin{align*}
-E_h f(x) 
&= \sum_{q\in\mathbb{Z}^d} w_\delta(hq,x)\,(f(hq)-f(x)) \\
&= \sum_{q\in\mathbb{Z}^d} w_\delta(hq,x)
\biggl(
\sum_{0<|\alpha|\leq r} \frac{D^\alpha f(x)}{\alpha!}(hq-x)^\alpha + \cR_r f(hq,x)
\biggr) \\
&= \sum_{q\in\mathbb{Z}^d} w_\delta(hq,x)\,\cR_r f(hq,x).
\end{align*}
For every $y \in B_\delta(x)$, the integral representation of the remainder yields
\begin{align*}
|\cR_r f(y,x) |
&\leq 
\sum_{|\alpha|=r+1}\frac{r+1}{\alpha!}\int_0^1 (1-t)^{r}\Vert y-x \Vert^{r+1}|D^\alpha f(x+t(y-x)) |\,dt \\ 
&\leq 
\biggl(\sum_{|\alpha|=r+1}\frac{1}{\alpha!}\biggr)\delta^{r+1}\,|f |_{W^{r+1,\infty}(B_\delta(x))}.
\end{align*}
Moreover, using the bound (\ref{lambdabound1}) on $\lambda(x)$ leads to 
\begin{align*}
|w_\delta(y,x) |
&\leq 
\tau_*^{-1} \nu^{-d} \Vert \omega_0 \Vert \biggl|\sum_{j=1}^M \Phi\biggl(\frac{x-y}{\delta}\biggr)p_j\biggl(\frac{x-y}{\delta}\biggr)\biggr|\\ 
&\leq 
M\, \tau_*^{-1} \nu^{-d} \Vert \omega_0 \Vert \max_{1\leq j\leq M} \Vert \Phi p_j \Vert_{L^\infty(B_1(0))}.
\end{align*}
Finally, observe that
\begin{align}\label{bound_for_index_set}
|X_h \cap B_\delta(x) |
\leq \frac{\textup{vol}_d(B_{\delta+\frac{h}{2}}(0))}
{\textup{vol}_d(B_{\frac{h}{2}}(0))}
= (1+2\nu)^d < 3^d \nu^d.
\end{align}
Collecting the above estimates yields the desired result.
\end{proof}
\end{theorem}

The classical theory we just presented leaves no reason to assume that
any rates higher than $r+1$ can be achieved. However, if $r$ is even,
$f \in C^{r+2}(\R^d)$ and the data sites form a regular grid, a convergence rate of $r+2$ is usually
observed numerically. This is the case because the MLS weights
"almost" reproduce polynomials of degree $r+1$, as the following
theorem shows. 

\begin{theorem}\label{thm:superconv}
Let $w_\delta : \R^d \times \R^d \to \R$ denote the MLS weight
functions for reproduction degree $r \in \mathbb{N}_0$ and suppose the
kernel satisfies $\Phi = \Phi(-\cdot)$ in addition to the properties
from Assumption \ref{general_assumption}. Then, there exist constants $\wt\nu = \wt\nu(d,r)
> 0$ and $C_\alpha = C_\alpha(\Phi,d,m,r)>0$ such that 
\begin{equation}\label{claim:superconv}
\biggl|\sum_{q\in\Z^d} w_\delta(hq,x)(x-hq)^\alpha\biggr|
\leq 
C_\alpha \delta^{|\alpha|} \nu^{1-m},
\end{equation}
provided that $\nu \geq \wt\nu$ and $|\alpha|$ is odd.
\end{theorem}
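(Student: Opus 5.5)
Write $S_\alpha(x) := \sum_{q\in\Z^d} w_\delta(hq,x)(x-hq)^\alpha$. The plan is to compare every discrete moment sum appearing in $S_\alpha$ with its continuous counterpart, where the symmetry $\Phi=\Phi(-\cdot)$ forces all odd moments to vanish, and to control the discretization error with Proposition~\ref{prop:rav}. Since $w_\delta$ does not depend on the basis $\cB$, I will work with the monomial basis $\{y^\beta : |\beta|\le r\}$, so that $\omega_0 = e_0$ (the unit vector belonging to $\beta=0$). Inserting \eqref{def:weightfct} gives
\[
S_\alpha(x) = \delta^{|\alpha|}\sum_{|\beta|\le r}\lambda_\beta(x)\, s_{\alpha+\beta}(x),\qquad s_\gamma(x) := \sum_{q\in\Z^d}\Phi\Bigl(\frac{x-hq}{\delta}\Bigr)\Bigl(\frac{x-hq}{\delta}\Bigr)^{\gamma}.
\]
In the same notation the entries of the shape matrix are $a_{\beta\beta'}(x) = s_{\beta+\beta'}(x)$.

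By Proposition~\ref{prop:rav} with $\ell=m$, exactly as in the proof of Lemma~\ref{lem:Ax_spec_est}, I get $s_\gamma(x) = \nu^d\mu_\gamma + O(\nu^{d-m})$ with $\mu_\gamma := \int_{\R^d}\Phi(y)y^\gamma\,dy$, uniformly in $x$. The symmetry of $\Phi$ gives $\mu_\gamma=0$ whenever $|\gamma|$ is odd. Split $A(x) = \nu^d A_0 + \nu^{d-m}R(x)$, where $A_0 = (\mu_{\beta+\beta'})$ is constant and $\|R(x)\|_2$ is bounded. The matrix $A_0$ is positive definite with $\lambda_{\min}(A_0) = \min_{\|v\|=1}\|\Phi p_v^2\|_{L^1} > 0$. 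It has a parity block structure: $(A_0)_{\beta\beta'} = 0$ when $|\beta|+|\beta'|$ is odd, so $A_0$ is block diagonal with respect to even and odd $|\beta|$. Hence $A_0^{-1}$ has the same structure, and $\lambda^0 := A_0^{-1}e_0$ is supported on indices with $|\beta|$ even.

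Next I perturb the inverse. Using Lemma~\ref{lem:Ax_inv_spec_est} and the identity $A^{-1} - \nu^{-d}A_0^{-1} = -A^{-1}(A-\nu^dA_0)\nu^{-d}A_0^{-1}$, I obtain $\|\lambda(x) - \nu^{-d}\lambda^0\| \le C\nu^{-d}\cdot\nu^{d-m}\cdot\nu^{-d} = C\nu^{-d-m}$ for $\nu\ge\nu_*$. Since $\Phi$ is symmetric, $\wt\nu$ is taken as $\nu_*$, possibly enlarged. Now split
\[
\sum_{|\beta|\le r}\lambda_\beta s_{\alpha+\beta} = \sum_{|\beta|\le r}\nu^{-d}\lambda^0_\beta s_{\alpha+\beta} + \sum_{|\beta|\le r}\bigl(\lambda_\beta-\nu^{-d}\lambda^0_\beta\bigr)s_{\alpha+\beta}.
\]
The second sum is bounded by $C\nu^{-d-m}\cdot\nu^d = C\nu^{-m}$, because $|s_\gamma|\le C\nu^d$ by the counting bound \eqref{bound_for_index_set}. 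In the first sum only even $|\beta|$ contribute, so $|\alpha+\beta|$ is odd, $\mu_{\alpha+\beta}=0$, and $s_{\alpha+\beta} = O(\nu^{d-m})$. Hence the first sum is $O(\nu^{-m})$ as well. Altogether this gives $|S_\alpha(x)|\le C_\alpha\delta^{|\alpha|}\nu^{-m}$, which is even stronger than the claimed $\nu^{1-m}$.

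The step I expect to be the main obstacle is applying Proposition~\ref{prop:rav} to $g(y)=\Phi((x-y)/\delta)((x-y)/\delta)^\gamma$ with $\ell=m$. Each derivative produces a factor $\delta^{-1}$, so $h^m|g|_{W^{m,\infty}} \lesssim \nu^{-m}$, and one must check that the constant is uniform in $x$ and in the multi-indices, which number finitely many since $|\alpha|$ must be bounded for $C_\alpha$ to make sense. If the paper's version of the Raviart estimate only allowed $\ell=m-1$, for instance because of the boundary regularity of the support, the same argument would yield exactly the stated bound $\nu^{1-m}$. I would therefore present the argument with $\ell = m-1$ to be safe.
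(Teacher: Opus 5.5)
Your argument is correct, and it takes a genuinely different route from the paper.

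The paper also passes to the monomial basis. It then uses the exact point symmetry and $h$-periodicity of the lattice. From $A(-x)=DA(x)D$ it gets $\lambda_j(-x)=(-1)^{|\beta^j|}\lambda_j(x)$, so $\lambda_j$ vanishes on $h\Z^d$ whenever $|\beta^j|$ is odd. It then splits $\lambda_j(x)=(\lambda_j(x)-\lambda_j(hq))+\lambda_j(0)$:
\begin{itemize}
\item The $\lambda_j(0)$-part involves only even $|\beta^j|$. It is handled by Proposition~\ref{prop:rav} and the vanishing odd moments, exactly as in your first sum.
\item The difference part is bounded with the gradient estimate \eqref{lambdabound2} together with $\|x-hq\|\le\delta$. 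This is where the weaker factor $\nu^{1-m}$ comes from.
\end{itemize}

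You instead replace $A(x)$ by the constant moment matrix $\nu^d A_0$, whose even/odd block structure is exact. You then control $\lambda(x)-\nu^{-d}A_0^{-1}e_0$ through the identity $A^{-1}-\nu^{-d}A_0^{-1}=-A^{-1}(A-\nu^dA_0)\nu^{-d}A_0^{-1}$, which needs only \eqref{claim:Ax_inv_spec_est_1}.

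What your route buys:
\begin{itemize}
\item It needs no derivative bounds on $A^{-1}$, no periodicity, and no reflection symmetry of the grid. The lattice enters only through the quadrature estimate, so the argument carries over to any point family with a uniform Raviart-type bound.
\item It gives the sharper rate $\nu^{-m}$.
\end{itemize}
What the paper's route buys is the exact structural fact that the odd-degree coefficients of $\lambda$ vanish on the grid. Its loss of one power of $\nu$ is avoidable: by periodicity one may measure $\|x-hq\|$ to the nearest grid point, which is at most $\sqrt d\,h/2$, and this also gives $\nu^{-m}$.

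Your closing hedge about using $\ell=m-1$ is unnecessary. Since $g\in C^m_c(\R^d)$, Proposition~\ref{prop:rav} applies with $\ell=m$. This is exactly how it is used in Lemma~\ref{lem:Ax_spec_est} and in the paper's own proof of this theorem.
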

\begin{proof}
Of course, only the case $|\alpha|\geq r+1$ is relevant, as the
estimate is a trivial consequence of the polynomial reproduction if
$|\alpha|\leq r$. Since the weight functions do not depend on the
basis choice, we may choose $\cB$ to be a monomial basis without
changing the left hand side of~\eqref{claim:superconv}. Set $\beta^1 =
0$ and write  
\[
\{\beta \in \mathbb{N}_0^d : |\beta |\leq r\} = \{\beta^1,\beta^2,\ldots,\beta^M\},
\]
then define $\cB := \{p_1,\ldots,p_M\}$ with $p_j(x) = x^{\beta^j}$ and
set $\wt\nu := \nu_*$ to the constant from~\eqref{Ax_smallest_eval} corresponding to this basis. For this particular basis, it holds
that $\omega_0 = e_1$. Using the symmetry of both the lattice and the kernel
function, we see that the matrix entries $a_{ij}(x)$ from
(\ref{matrixA}) satisfy
\begin{align*}
a_{ij}(-x) &= \sum_{q\in\Z^d}
\Phi\biggl(\frac{-x-hq}{\delta}\biggr)\biggl(\frac{-x-hq}{\delta}\biggr)^{\beta^i
  + \beta^j} \\  
&= 
\sum_{q\in\Z^d}
\Phi\biggl(\frac{hq-x}{\delta}\biggr)\biggl(\frac{hq-x}{\delta}\biggr)^{\beta^i
  + \beta^j} = (-1)^{|\beta^i|+ |\beta^j|} a_{ij}(x). 
\end{align*}
Therefore, writing $D := \textup{diag}\bigl((-1)^{|\beta^1|},
(-1)^{|\beta^2|}, \ldots, (-1)^{|\beta^M|}\bigr) \in \R^{M \times M}$
this gives
\[
A(-x) = D A(x)D
\]
and consequently
\[
\lambda_j(-x) = e_j^T A(-x)^{-1} e_1 = (-1)^{|\beta^1|+ |\beta^j|}
e_j^T A(x)^{-1} e_1 = (-1)^{|\beta^j|} \lambda_j(x). 
\] 
In particular, due to the fact that $\lambda_j$ is $h$-periodic in
every component, we have $\lambda_j(hq) = \lambda_j(0) = 0$ provided
that $|\beta^j|$ is odd. Using this fact, this leads to
\begin{align}\label{super_reform}
\begin{split}
\sum_{q\in\Z^d} w_\delta(hq,x)(x-hq)^\alpha 
&= 
\delta^{|\alpha|}\sum_{j=1}^M \sum_{q\in\Z^d} (\lambda_j(x)-\lambda_j(hq)) \,\Phi\biggl(\frac{x-hq}{\delta}\biggr)\biggl(\frac{x-hq}{\delta}\biggr)^{\beta^j+\alpha} \\ 
&+ 
\delta^{|\alpha|}\sum_{j \in \{1,\ldots,M\} \,:\, |\beta^j|\text{ even}} \lambda_j(0) \sum_{q\in\Z^d} \Phi\biggl(\frac{x-hq}{\delta}\biggr)\biggl(\frac{x-hq}{\delta}\biggr)^{\beta^j+\alpha}.
\end{split}
\end{align}
We begin by estimating the second sum in \eqref{super_reform} and claim that there exists a constant $C_\alpha'>0$ such that
\begin{equation}\label{super_est_sum_1}
\biggl|\sum_{j \in \{1,\ldots,M\} \,:\, |\beta^j|\text{ even}} \lambda_j(0) \sum_{q\in\Z^d} \Phi\biggl(\frac{x-hq}{\delta}\biggr)\biggl(\frac{x-hq}{\delta}\biggr)^{\beta^j+\alpha}\biggr|\leq C_\alpha' \nu^{-m}.
\end{equation}
To see this, note that for every $\beta^j$ with even length, the symmetry of $\Phi$ implies that
\[
\int_{\R^d} \Phi\biggl(\frac{x-y}{\delta}\biggr)\biggl(\frac{x-y}{\delta}\biggr)^{\beta^j+\alpha} \, dy = 0,
\]
as $|\beta^j| + |\alpha|$ is odd. Thus we may apply Proposition~\ref{prop:rav} to obtain 
\[
\biggl|\sum_{q\in\Z^d} \Phi\biggl(\frac{x-hq}{\delta}\biggr)\biggl(\frac{x-hq}{\delta}\biggr)^{\beta^j+\alpha}\biggr|\leq C\nu^{d-m}
\]
for a suitable constant $C>0$ depending on $\Phi$ as well as
$d,m,\beta^j$ and $\alpha$. As $|\lambda_j(0)|$ is bounded from above
by a constant proportional to $\nu^{-d}$, this
proves~\eqref{super_est_sum_1}. Next, we tend to the first sum in
\eqref{super_reform} and show that there exists a constant
$C_\alpha''>0$ such that 
\begin{equation}\label{super_est_sum_2}
\biggl|\sum_{j=1}^M \sum_{q\in\Z^d} (\lambda_j(x)-\lambda_j(hq))
\,\Phi\biggl(\frac{x-hq}{\delta}\biggr)\biggl(\frac{x-hq}{\delta}\biggr)^{\beta^j+\alpha}
\biggr|\leq C_\alpha'' \nu^{1-m}. 
\end{equation}
Recall (\ref{lambdabound2}), 
from which we know that there exists a constant $\wt{C}>0$ with 
\[
|\lambda_j(x) - \lambda_j(hq) |\leq \sup_{y\in\R^d}\|\nabla
\lambda_j(y)\|\Vert x - hq\Vert\leq \wt{C} \nu^{1-m-d}
\,\biggl\Vert\frac{x-hq}{\delta}\biggr\Vert. 
\]
Since the supremum norm of the map $y \mapsto \Phi(y) \Vert
y\Vert^{r+1+|\alpha|}$ and $\nu^{-d}|X_h \cap B_\delta(x) |$ are
bounded by constants, we obtain~\eqref{super_est_sum_2}. Starting
from~\eqref{super_reform} and combining both~\eqref{super_est_sum_2}
and~\eqref{super_est_sum_1} yields the claim. 
\end{proof}

This allows us to state and prove the following improved convergence
result for the standard MLS scheme. 

\begin{theorem}\label{thm:classic_error_improved}
Suppose $r \in \mathbb{N}_0$ is even and $f \in C^{r+2}(\R^d)$. If $\Phi = \Phi(-\cdot)$ and $\nu \geq \wt\nu$, where $\wt\nu$ is the constant from the previous theorem, then there exists $C = C(\Phi,d,m,r)>0$ such that the error $E_hf=f-Q_hf$ of the standard MLS method satisfies
\[
|E_h f(x) |\leq C\bigl( \nu^{1-m}\delta^{r+1}\, |f
|_{W^{r+1,\infty}(B_\delta(x))} + \delta^{r+2} |f
|_{W^{r+2,\infty}(B_\delta(x))}\bigr) 
\]
for all $x \in \R^d$. In particular, if $\nu^{-m} \leq h$ this means
\[
|E_h f(x) |\leq 2 C \delta^{r+2} |f |_{W^{r+1,r+2,\infty}(B_\delta(x))}.
\]
\begin{proof}
Fix $x \in \R^d$. We proceed similarly to the proof of Theorem~\ref{thm:classic_error}, noting that
\[
-E_h f(x) = \sum_{q\in\Z^d} w_\delta(hq,x)\biggl(\sum_{0<|\alpha|\leq r} \frac{D^\alpha f(x)}{\alpha!}(hq-x)^\alpha + \sum_{|\alpha|=r+1} \frac{D^\alpha f(x)}{\alpha!}(hq-x)^\alpha + \cR_{r+1} f(hq,x)\biggr).
\]
Due to the polynomial reproduction property,
\[
\sum_{q\in\Z^d} w_\delta(hq,x) \sum_{0<|\alpha|\leq r} \frac{D^\alpha f(x)}{\alpha!}(hq-x)^\alpha = 0.
\]
Since $r+1$ is odd, Theorem~\ref{thm:superconv} yields 
\begin{align*}
\biggl|\sum_{q\in\Z^d} w_\delta(hq,x)\sum_{|\alpha|=r+1} \frac{D^\alpha f(x)}{\alpha!}(hq-x)^\alpha \biggr|
&\leq 
\sum_{|\alpha|=r+1} \frac{|D^\alpha f(x)|}{\alpha!}\biggl|\sum_{q\in\Z^d} w_\delta(hq,x)(x-hq)^\alpha\biggr|\\ 
&\leq \delta^{r+1}\biggl(\sum_{|\alpha|=r+1}\frac{C_\alpha}{\alpha!}\biggr) \nu^{1-m} |f|_{W^{r+1,\infty}(B_\delta(x))}.
\end{align*}
Finally, just like in the proof of Theorem~\ref{thm:classic_error}, we can estimate
\begin{align*}
\biggl|\sum_{q\in\Z^d} w_\delta(hq,x) \cR_{r+1} f(hq,x) \biggr|
&\leq 
|X_h \cap B_\delta(x) |\sup_{y\in B_\delta(x)} |w_\delta(y,x) ||\cR_{r+1} f(y,x) |\\ 
&\leq 
C \delta^{r+2} |f|_{W^{r+2,\infty}(\R^d)}
\end{align*}
for a suitable constant $C>0$ not depending on $\nu$.
\end{proof}
\end{theorem}

\section{The Multilevel Approach}\label{sec:MLS}

We begin by introducing the multilevel procedure that will be analyzed in this chapter. 
Fix three parameters $h_0 > 0$ and $0 < \mu < 1 < \nu$. As usual,
$h_0$ denotes the initial mesh width,  
 $\mu$ is the refinement factor between two successive levels and
$\nu$ is the ratio between support radius and mesh width. Hence, for each 
$j\in \N_0$, we define
\[
h_j := \mu^j h_0
\qquad\text{and}\qquad
\delta_j := \nu h_j .
\]

The MLS multilevel scheme which we will describe now is a simple error correction
scheme. We construct a sequence of multilevel approximants 
$\cQ_0, \cQ_1, \cQ_2, \ldots$ and the associated sequence of error operators
$\cE_0, \cE_1, \cE_2, \ldots$ by starting with $\cQ_0f:=0$ and
$\cE_0f=f$ and then defining for $j\in\N$ recursively 
\begin{align*}
\cQ_j f 
&:= \cQ_{j-1} f + Q_{h_j}\,\cE_{j-1} f,
\\
\cE_j f 
&:= \cE_{j-1}f - Q_{h_j}\cE_{j-1}f .
\end{align*}

We obviously have  $\cE_j f=  E_{h_j}\,\cE_{j-1} f=E_{h_j} E_{h_{j-1}} \cdots
E_{h_1} f$. Moreover, the following elementary identities hold.

\begin{lemma}
For all $j \in \N_0$, the iterations of the MLS multilevel scheme satisfy
\[
\cE_j f = f - \cQ_j f
\qquad\text{and}\qquad
\cQ_j f = \sum_{i=1}^j Q_{h_i}\,\cE_{i-1} f .
\]
\end{lemma}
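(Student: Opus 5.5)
The plan is to prove both identities at once by induction on $j$, using only the recursive definitions of $\cQ_j$ and $\cE_j$ and the linearity of the operators $Q_{h_i}$ (which follows from the representation of $Q_h$ via the weight functions $w_\delta$ in the first proposition of Section~\ref{sec:classic_mls}).

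\emph{Base case $j=0$.} By definition $\cQ_0 f = 0$ and $\cE_0 f = f$, so $\cE_0 f = f - \cQ_0 f$ holds trivially. The sum $\sum_{i=1}^0 Q_{h_i}\cE_{i-1}f$ is empty, hence equals $0 = \cQ_0 f$.

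\emph{Induction step.} Assume both identities hold for $j-1$ with $j\ge 1$.

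For the first identity, substitute the induction hypothesis $\cE_{j-1}f = f-\cQ_{j-1}f$ into the definition of $\cE_j$:
\[
\cE_j f = \cE_{j-1}f - Q_{h_j}\cE_{j-1}f = f - \cQ_{j-1}f - Q_{h_j}\cE_{j-1}f = f - \bigl(\cQ_{j-1}f + Q_{h_j}\cE_{j-1}f\bigr) = f-\cQ_j f .
\]

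For the second identity, unrolling the recursion gives
\[
\cQ_j f = \cQ_{j-1}f + Q_{h_j}\cE_{j-1}f = \sum_{i=1}^{j-1} Q_{h_i}\cE_{i-1}f + Q_{h_j}\cE_{j-1}f = \sum_{i=1}^{j} Q_{h_i}\cE_{i-1}f ,
\]
which is a telescoping sum.

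There is no real obstacle. The only point needing care is well-definedness: each $Q_{h_i}$ must make sense, which requires $\nu\ge\nu_*$ so that $A(x)$ is positive definite on every level. Note that $\nu_*$ does not depend on $h$, and $\nu = \delta_j/h_j$ is the same on all levels, so this condition is uniform in $j$. In addition, each $\cE_{i-1}f$ must be a continuous function on $\R^d$, so that $Q_{h_i}$ can be applied to it. This holds because $Q_h$ maps $C(\R^d)$ into $C^m(\R^d)\subseteq C(\R^d)$.
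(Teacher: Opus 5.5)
Your proof is correct and follows the paper's argument: induction on $j$ for $\cE_j f = f - \cQ_j f$ by substituting the hypothesis into the recursion, and the sum formula by unrolling the recursive definition of $\cQ_j$. Two minor remarks: linearity of $Q_{h_i}$ is never actually used, and the unrolled sum is not a telescoping sum.
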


\begin{proof}
We prove the first identity by induction. The case $j=0$ follows
directly from the definitions. 
Let $j \in \N$, and assume the statement holds for $j-1$. Then, 
\[
\cE_j f  = \cE_{j-1} f - Q_{h_j}\,\cE_{j-1} f 
= \bigl(f - \cQ_{j-1} f\bigr) - \bigl(\cQ_j f - \cQ_{j-1} f\bigr) = f - \cQ_j f,
\]
where the induction hypothesis and the definition of $\cQ_j$ was used in the second step.
The second stated identity follows immediately from the recursive definition of $\cQ_j$.
\end{proof}

Clearly, evaluating $\cQ_j f$ is computationally more expensive than computing 
$Q_{h_j} f$ alone. However, provided that $\nu$ is sufficiently large and $\mu$ is 
sufficiently small, we will show that for any nonempty bounded domain $\Omega \subset \R^d$ 
the sequence of errors $\|\cE_j f\|_{L^\infty(\Omega)}$ can be expected to
converge to zero substantially faster than  
$\|E_{h_j} f\|_{L^\infty(\Omega)}$ as $j \to \infty$.

\subsection{Error Analysis}\label{sec:error_analysis}

We begin our error analysis of this multilevel MLS method by
introducing a family of semi-norms that will be used throughout this
subsection. 
Let $\varepsilon > 0$, and let $j, n \in \N_0$ be integers with $j \leq n$.
For any nonempty bounded domain $\Omega \subset \R^d$ and any function
$g \in W^{n,\infty}(\Omega)$, we define
\[
|g |_{W^{j,n,\infty}_\varepsilon(\Omega)}
:=
\max_{j \leq |\alpha|\leq n}
\bigl( \varepsilon^{|\alpha|}
      \|D^\alpha g \|_{L^\infty(\Omega)} \bigr),
\]
and, in particular,
\[
|g |_{W^{n,\infty}_\varepsilon(\Omega)}
:=
|g |_{W^{n,n,\infty}_\varepsilon(\Omega)}
= \varepsilon^n |g |_{W^{n,\infty}(\Omega)} .
\]

Let us now outline the strategy of the error analysis. Our goal is to
establish the following recursive result on the MLS error operator. As
usual $B_R(x)$ denotes the open ball about $x$ with radius $R>0$. 

\begin{theorem}\label{thm:main}
There exists a constant $\cC = \cC(\cT, k)>0$ such that for every $f
\in C^k(\mathbb{R}^d)$ and all $h > 0$, $R > 0$, $\nu \geq \nu_*$ and
$r+1 \leq n \leq k$, the estimate 
\begin{equation}\label{claim:main}
|E_h f |_{W^{n,\infty}_{\delta}(B_R(x))}
\leq
\cC\,\bigl(\nu^{n-m} |f|_{W^{r+1,k,\infty}_\delta(B_{R+\delta}(x))} +
|f|_{W^{\min\{n+r+1,k\},\infty}_\delta(B_{R+\delta}(x))} \bigr) 
\end{equation}
holds, where $\delta \defg \nu h$. In particular, for every $\mu\in(0,1]$ we obtain
\begin{equation}\label{claim:main_secondary}
|E_h f|_{W_\delta^{r+1,k,\infty}(B_R(x))} \leq
\cC\,\bigl(\nu^{k-m}\mu^{r+1} + \mu^{\min\{2r+2,k\}}\bigr)
\,|f|_{W_{\delta\slash\mu}^{r+1,k,\infty}(B_{R+\delta}(x))}. 
\end{equation}
\end{theorem}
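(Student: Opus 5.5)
We need to bound $\delta^{|\beta|}\|D^\beta E_h f\|_{L^\infty(B_R(x))}$ for every $|\beta|=n$ with $r+1\le n\le k$. Since $n\ge r+1$, we have $D^\beta E_hf = D^\beta f - D^\beta Q_hf$. The term $\delta^n D^\beta f$ is dominated by the second term on the right of \eqref{claim:main} when $n+r+1>k$. So the real work is to show that $D^\beta Q_h f$ approximates $D^\beta f$ with an error of order $\delta^{\min\{n+r+1,k\}-n}$, up to a $\nu^{n-m}$ perturbation.

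Fix $y\in B_R(x)$ and write $D^\beta Q_hf(y)=\sum_q D^\beta w_q(y) f(hq)$. We Taylor-expand $f(hq)$ about $y$ to order $s:=\min\{n+r,k-1\}$:
\[
f(hq)=\sum_{|\alpha|\le s}\frac{D^\alpha f(y)}{\alpha!}(hq-y)^\alpha+\cR_s f(hq,y).
\]
Lemma~\ref{lem:weightfct_mixed_derivatives} kills all terms with $|\alpha|\le r$ and $\alpha\ne\beta$; since $|\beta|=n>r$, every term with $|\alpha|\le r$ vanishes.

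The remainder term is handled exactly as in Theorem~\ref{thm:classic_error}. We need a pointwise bound $|D^\beta w_q(y)|\le C\delta^{-n}\nu^{-d}$, which follows from the product rule applied to \eqref{def:weightfct} together with \eqref{lambdabound1} and \eqref{lambdabound2}: each derivative on $\Phi p_j((y-\cdot)/\delta)$ costs $\delta^{-1}$, and derivatives on $\lambda$ cost even less. Combined with the count \eqref{bound_for_index_set} and $|\cR_sf|\le C\delta^{s+1}|f|_{W^{s+1,\infty}(B_\delta(y))}$, this gives the contribution $\delta^{-n}\,|f|_{W^{\min\{n+r+1,k\},\infty}_\delta(B_{R+\delta}(x))}$.

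The middle terms $r<|\alpha|\le s$ are the main obstacle. We need
\[
S_{\alpha\beta}(y):=\sum_q D^\beta w_q(y)(y-hq)^\alpha \approx (-1)^{|\alpha|}\alpha!\,\delta_{\alpha\beta},
\]
up to an error $C\delta^{|\alpha|-n}\nu^{n-m}$. With such a bound, the $\alpha=\beta$ term cancels $D^\beta f(y)$, and the remaining terms give $\nu^{n-m}\delta^{|\alpha|-n}|D^\alpha f(y)|$. This is bounded by $\nu^{n-m}\delta^{-n}|f|_{W^{r+1,k,\infty}_\delta}$, because $r+1\le|\alpha|\le k$.

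To estimate $S_{\alpha\beta}$, we substitute \eqref{def:weightfct} and expand $D^\beta$ by the Leibniz rule, splitting it into $D^{\gamma}\lambda_j(y)$ and $D^{\beta-\gamma}_y\bigl[\Phi p_j\bigl((y-hq)/\delta\bigr)\bigr]$. After factoring out $\delta^{|\alpha|}$, the lattice sums $\sum_q D^{\beta-\gamma}_y\bigl[\Phi p_j\bigr]\bigl((y-hq)/\delta\bigr)\bigl((y-hq)/\delta\bigr)^\alpha$ are compared with the corresponding integrals via Proposition~\ref{prop:rav} with $\ell=m-|\beta-\gamma|$, which produces errors $\nu^{d-m}\delta^{-|\beta-\gamma|}\nu^{|\beta-\gamma|}$ (using $h=\delta/\nu$).

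For $\gamma\ne0$, the factor $\|D^\gamma\lambda\|\lesssim\delta^{-|\gamma|}\nu^{|\gamma|-m-d}$ multiplies a sum of size $O(\nu^d\delta^{-|\beta-\gamma|})$. The product is $O(\delta^{-n}\nu^{|\gamma|-m})\le O(\delta^{-n}\nu^{n-m})$, as required.

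For $\gamma=0$, $\lambda(y)$ is only $O(\nu^{-d})$, so we must compare with the continuous analogue. The integrals are translation-invariant in $y$, hence their $y$-derivatives of positive order vanish. The only surviving piece is the $\beta=0$-type term, which does not occur since $n>0$. What remains is the quadrature error $\nu^{-d}\cdot\nu^{d-m}\nu^{n}\delta^{-n}$, again of the required order.

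Hence in fact $|S_{\alpha\beta}|\lesssim\delta^{|\alpha|-n}\nu^{n-m}$ for $|\alpha|>r$ and $\alpha\ne\beta$. For $\alpha=\beta$, the same computation combined with the exact identity obtained by differentiating $\sum_q w_q(y)(y-hq)^\beta$ gives $\beta!(-1)^{n}$ plus the same kind of error; this is where care is needed. If this direct approach fails, the fallback is to treat $\alpha=\beta$ inside the Taylor expansion together with $D^\beta f(y)$, using the integral form of the remainder centred at $y$.

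Finally, \eqref{claim:main_secondary} follows by applying \eqref{claim:main} for each $n\in\{r+1,\dots,k\}$ and rescaling $\delta^{|\alpha|}=\mu^{|\alpha|}(\delta/\mu)^{|\alpha|}$. The worst case is $\nu^{n-m}\le\nu^{k-m}$ with $\mu^{r+1}$, and for the second term $\mu^{\min\{n+r+1,k\}}\le\mu^{\min\{2r+2,k\}}$.
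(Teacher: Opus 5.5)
\paragraph{Verdict.} Your overall architecture is sound, but the decisive step is wrong as written. The sound parts are: Taylor expansion of $f(hq)$ about $y$ to order $s=\min\{n+r,k-1\}$, Lemma~\ref{lem:weightfct_mixed_derivatives} for $|\alpha|\le r$, the remainder bound, and the Leibniz terms with $\gamma\neq0$.

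\paragraph{The gap.} For $\gamma=0$ you assert that the continuous analogues of the lattice sums vanish because they are translation invariant in $y$. They are indeed independent of $y$, but they are not $y$-derivatives of a $y$-independent quantity, since $D_y^\beta$ acts only on $\Phi p_j((y-z)/\delta)$ and not on $(y-z)^\alpha$. Substituting $u=(y-z)/\delta$ and integrating by parts gives
\[
\frac{1}{h^d}\int_{\R^d} D_y^\beta\Bigl[\Phi\Bigl(\frac{y-z}{\delta}\Bigr)p_j\Bigl(\frac{y-z}{\delta}\Bigr)\Bigr](y-z)^\alpha\,dz
=(-1)^{n}\frac{\alpha!}{(\alpha-\beta)!}\,\nu^d\delta^{|\alpha|-n}\int_{\R^d}\Phi(u)p_j(u)u^{\alpha-\beta}\,du
\]
for $\beta\le\alpha$, and $0$ otherwise. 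After multiplication by $\lambda_j(y)=O(\nu^{-d})$ this is of order $\delta^{|\alpha|-n}$, not small.

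Your argument is in fact self-contradictory. If these integrals vanished, the same reasoning would give $S_{\beta\beta}=O(\nu^{n-m})$, and $D^\beta f(y)$ could never be cancelled. That is exactly why the case $\alpha=\beta$ looks problematic to you. The ``exact identity'' you want to differentiate does not exist for $|\beta|=n>r$, because there is no reproduction of degree $n$. The fallback you mention does not supply the missing fact either.

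A related slip: your opening remark that $\delta^n D^\beta f$ is absorbed by the second term when $n+r+1>k$ is only true for $n=k$. For $n<k$ that term is $\delta^k|f|_{W^{k,\infty}}$, so the cancellation is genuinely needed there too.

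\paragraph{The missing idea and the repair.} What you need is the integration-by-parts identity the paper establishes before Lemma~\ref{lem:E2}. Summing the display over $j$ turns it into
\[
(-1)^n\frac{\alpha!}{(\alpha-\beta)!}\,h^{-d}\int_{\R^d} w_\delta(z,y)(y-z)^{\alpha-\beta}\,dz .
\]
Since $|\alpha-\beta|=|\alpha|-n\le s-n\le r$ (this is where $s\le n+r$ is really needed), a second application of Proposition~\ref{prop:rav}, now with $\ell=m$, compares this with $\sum_q w_q(y)(y-hq)^{\alpha-\beta}$. By polynomial reproduction that sum equals $1$ for $\alpha=\beta$ and $0$ otherwise, and the comparison error is $O(\nu^{-m}\delta^{|\alpha|-n})$.

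This yields $S_{\alpha\beta}=(-1)^n\beta!+O(\delta^{|\alpha|-n}\nu^{n-m})$ for $\alpha=\beta$, and $S_{\alpha\beta}=O(\delta^{|\alpha|-n}\nu^{n-m})$ for all other $r<|\alpha|\le s$. With this your proof closes.

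\paragraph{Comparison with the paper.} Once repaired, your route is a legitimate reorganization of the paper's. Your $\gamma\ne0$ terms are the paper's $\varepsilon^{\mathrm{prod}}_\gamma$ (Lemma~\ref{lem:E1}). Your moment analysis replaces the split into $\varepsilon^{\mathrm{quad}}_\gamma$ and $E_hD^\gamma f$, so you avoid invoking Theorem~\ref{thm:classic_error}. Your derivation of \eqref{claim:main_secondary} from \eqref{claim:main} is fine.
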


For the remainder of this subsection, we fix $h > 0$ and $\nu \geq
\nu_*$. Furthermore, the integer $L \in \mathbb{N}$ will count the
number of levels. Assuming that Theorem~\ref{thm:main} holds, one can
show that the multilevel method exhibits superior convergence compared
to the classical MLS approach. To formulate our results more
concisely, let us introduce the two families  
\[
(R_j)_{0\leq j\leq L} \subset \R_{>0} \qquad \text{and} \qquad (\theta_j)_{0\leq j\leq L} \subset \{0,\ldots,k-r-1\}
\]
which are defined via
\[
R_j := R_{j-1} + \delta_{L-j+1} \qquad \text{and} \qquad \theta_{j} := \min\{j(r+1), k - r - 1\}
\]
for $1 \leq j \leq L$, where $R_0 > 0$ is arbitrary and
$\theta_0=0$. These families bear the following meanings. The number
$R_L$ is defined in such a way that evaluating the multilevel
approximant $\cQ_L f$ at a point $x \in \R^d$ only requires the grid
points lying in $X_h \cap B_{R_L-R_0}(x)$. Note that for every  $L \in
\mathbb{N}$, the bound 
\begin{equation}\label{S_L_bound}
S_L := R_L - R_0 = \delta_1 + \cdots + \delta_L \leq \sum_{j=1}^\infty \delta_j = \frac{\delta_1}{1-\mu}
\end{equation}
holds, which will imply that the multilevel method is localized
uniformly in $L$. Therefore, one of the main advantages of the MLS
procedure is maintained. The quantities $\theta_1,\ldots,\theta_L$ only
occur in powers of $\mu$ and essentially measure how many convergence
orders can be gained by the multilevel method as compared to the
classical approach. This will be made more precise in the following
result.  

\begin{lemma}\label{lem:multilevel}
Suppose $f\in C^k(\R^d)$ and $\cC=\cC(\cT, k)$ is the constant from
Theorem~\ref{thm:main}. Then we obtain 
\begin{equation}\label{true_recursion}
|\cE_L f |_{W^{r+1,\infty}_{\delta_L}(B_{R_0}(x))} \leq \cC^L \,
\mu^{L(r+1)} \biggl(\mu^{\theta_1+\cdots+\theta_L} +
\nu^{k-m}\sum_{j=0}^{L-1}\mu^{\theta_0+\cdots+\theta_j} \rho^{L-j-1}
\biggr) |f|_{W^{r+1,k,\infty}_{\delta_0}(B_{R_L}(x))} 
\end{equation}
for all $L \in \mathbb{N}$ and $x \in \R^d$, where $\rho := \nu^{k-m} +
\mu^{\theta_1}$. The simpler bound 
\begin{equation}\label{true_recursion_simplified}
|\cE_L f |_{W^{r+1,\infty}_{\delta_L}(B_{R_0}(x))} \leq  \cC^L
\mu^{L(r+1)} \rho^L |f|_{W^{r+1,k,\infty}_{\delta_0}(B_{R_L}(x))} 
\end{equation}
holds as well.
\end{lemma}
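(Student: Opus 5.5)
The plan is to apply Theorem~\ref{thm:main} once per level, with $h=h_j$ and $\delta=\delta_j$, and to unroll the resulting recursion. The key observation is how the scaled seminorms change between consecutive levels. Since $\delta_j=\mu\delta_{j-1}$, we have for every $g$ and every order $s$
\[
|g|_{W^{s,\infty}_{\delta_j}(\Omega)}=\mu^{s}|g|_{W^{s,\infty}_{\delta_{j-1}}(\Omega)},
\qquad
|g|_{W^{r+1,k,\infty}_{\delta_j}(\Omega)}\le \mu^{r+1}|g|_{W^{r+1,k,\infty}_{\delta_{j-1}}(\Omega)}.
\]
Throughout, all balls are centred at $x$, and we use the monotonicity of the seminorms in the domain whenever radii need to be enlarged.

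\textbf{Step 1: the simplified bound.} Define $c_j:=|\cE_j f|_{W^{r+1,k,\infty}_{\delta_j}(B_{R_{L-j}}(x))}$. Apply \eqref{claim:main_secondary} to $\cE_{j-1}f$ with $h=h_j$. Since $\delta_j/\mu=\delta_{j-1}$ and $R_{L-j}+\delta_j=R_{L-j+1}$, this gives
\[
c_j\le \cC\bigl(\nu^{k-m}\mu^{r+1}+\mu^{\min\{2r+2,k\}}\bigr)c_{j-1}.
\]
Because $\theta_1=\min\{r+1,k-r-1\}$, we have $\mu^{\min\{2r+2,k\}}=\mu^{r+1}\mu^{\theta_1}$, so the factor equals $\cC\mu^{r+1}\rho$. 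Iterating yields $c_j\le(\cC\mu^{r+1}\rho)^j c_0$, where $c_0=|f|_{W^{r+1,k,\infty}_{\delta_0}(B_{R_L}(x))}$. Finally $|\cE_Lf|_{W^{r+1,\infty}_{\delta_L}(B_{R_0}(x))}\le c_L$, which is \eqref{true_recursion_simplified}.

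\textbf{Step 2: the sharper bound, following a chain of orders.} Set $n_i:=r+1+\theta_i$ and $b_j(n):=|\cE_jf|_{W^{n,\infty}_{\delta_j}(B_{R_{L-j}}(x))}$.

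First, check the index bookkeeping. We have
\[
\min\{n_{i}+r+1,k\}=r+1+\min\{\theta_i+r+1,k-r-1\}=n_{i+1},
\]
and all $n_i\in[r+1,k]$. So \eqref{claim:main} applies with $n=n_i$ at level $L-i$. It gives
\[
b_{L-i}(n_i)\le \cC\bigl(\nu^{n_i-m}\mu^{r+1}c_{L-i-1}+\mu^{n_{i+1}}b_{L-i-1}(n_{i+1})\bigr).
\]
Here $\nu^{n_i-m}\le\nu^{k-m}$ because $\nu>1$ and $n_i\le k$.

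Next, unroll this from $i=0$ to $i=L-1$. The process terminates with $b_0(n_L)\le c_0$. The pure chain contributes
\[
\cC^L\mu^{n_1+\dots+n_L}=\cC^L\mu^{L(r+1)}\mu^{\theta_1+\dots+\theta_L}.
\]
The side term created at step $i$ carries the prefactor $\cC^{i+1}\mu^{n_1+\dots+n_i}\nu^{k-m}\mu^{r+1}c_{L-i-1}$. Inserting the Step 1 bound $c_{L-i-1}\le(\cC\mu^{r+1}\rho)^{L-i-1}c_0$, this becomes
\[
\cC^L\mu^{L(r+1)}\nu^{k-m}\mu^{\theta_0+\dots+\theta_i}\rho^{L-i-1}c_0.
\]
Summing over $i=0,\dots,L-1$ gives exactly \eqref{true_recursion}.

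\textbf{Expected difficulty.} The main obstacle is purely organizational. One must choose the right auxiliary quantities, namely the full-range seminorm $c_j$ for the $\nu^{k-m}$ side terms and the single-order seminorms $b_j(n_i)$ along the chain. One must also verify that the min-truncations in Theorem~\ref{thm:main} produce precisely the $\theta_i$. Finally, the radii $R_{L-j}$ have to telescope correctly, which they do by the definition $R_j=R_{j-1}+\delta_{L-j+1}$.
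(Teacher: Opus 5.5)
Your proposal is correct and follows essentially the same route as the paper. Your $b_{L-i}(n_i)$ and $c_{L-j}$ are exactly the paper's $\sigma_i$ and $\xi_j/(\cC\nu^{k-m})$ (with $n_i=\zeta_i$), and you use the same two recursions: the chain from \eqref{claim:main} via $\min\{\zeta_i+r+1,k\}=\zeta_{i+1}$, and the full-range recursion $\xi_j\le\cC\mu^{r+1}\rho\,\xi_{j+1}$ from \eqref{claim:main_secondary}. The only difference is the order: you prove \eqref{true_recursion_simplified} first and feed it into the unrolled chain, whereas the paper unrolls the chain first and gets the simplified bound at the end from $\cC\nu^{k-m}\sigma_0\le\xi_0$.
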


Lemma~\ref{lem:multilevel} is a direct consequence of
Theorem~\ref{thm:main}, and we will prove both results towards the end
of Subsection~\ref{sec:proofs}. Estimates~\eqref{true_recursion}
and~\eqref{true_recursion_simplified} motivate the definition of  
\begin{equation}\label{def:J_L}
J_L(\nu,\mu) := \cC \min\biggl\{\rho,
\biggl(\mu^{\theta_1+\cdots+\theta_L} +
\nu^{k-m}\sum_{j=0}^{L-1}\mu^{\theta_0+\cdots+\theta_j} \rho^{L-j-1}
\biggr)^{1\slash L}\biggr\}, 
\end{equation}
which precisely quantifies the speedup obtained by the multilevel
method. In the following theorem, which is the main result of this section, we use 
the the notation $B_R(\Omega)$ to denote the set of all $x \in \R^d$
for which there is a $y \in \Omega$ such that $\Vert x-y\Vert<
\delta$.

\begin{theorem}\label{thm:result}
Let $\Omega \subset \R^d$ be a nonempty and bounded open set. Then
there exists a constant $C_0 = C_0(\cT)>0$ such that, for all integers
$L \geq 2$ and all $f \in C^k(\mathbb{R}^d)$, 
\begin{equation}\label{claim:result_1}
\Vert\cE_L f\Vert_{L^\infty(\Omega)} \leq \frac{C_0}{h_0^{r+1}} \bigl(J_{L-1}(\nu,\mu)\bigr)^{L-1} h_L^{r+1} |f|_{W^{r+1,k,\infty}_{\delta_0}(B_{S_L}(\Omega))}.
\end{equation}
If $\Omega$ is a Lipschitz domain and $h_L = \mu^L h_0 < 1$, we additionally obtain
\begin{equation}\label{claim:result_2}
\|\cE_L f\|_{W^{n,\infty}(\Omega)} \leq \frac{C_0}{h_0^{r+1}} \bigl(J_{L-1}(\nu,\mu)\bigr)^{L-1} \,|f|_{W^{r+1,k,\infty}_{\delta_0}(B_{S_L}(\Omega))}.
\end{equation}
\begin{proof}
By Theorem~\ref{thm:classic_error}, there exists a constant $C_0$ as stated such that
\begin{align*}
\begin{split}
\Vert\cE_{L} f \Vert_{L^\infty(\Omega)}
&= \Vert E_{h_{L}} \cE_{L-1} f \Vert_{L^\infty(\Omega)}
\leq
C_0 |\cE_{L-1} f|_{W^{r+1,\infty}_{\delta_{L}}(B_{\delta_{L}}(\Omega))}  \\ &\leq C_0\, \mu^{r+1} |\cE_{L-1} f|_{W^{r+1,\infty}_{\delta_{L-1}}(B_{\delta_{L}}(\Omega))}.
\end{split}
\end{align*}
Hence~\eqref{claim:result_1} follows immediately from Lemma~\ref{lem:multilevel} with $R_0 := \delta_L$. To prove~\eqref{claim:result_2}, we combine Theorem~\ref{thm:classic_error} with the recursion estimate~\eqref{claim:main} in the case $k=n=r+1$ to see that
\[
\Vert\cE_L f \Vert_{L^\infty(\Omega)} + |\cE_L f |_{W_{\delta_L}^{r+1,\infty}(\Omega)} \leq C |\cE_{L-1} f|_{W_{\delta_L}^{r+1,\infty}(B_{\delta_L}(\Omega))}
\]
for a suitable constant $C>0$. The additional assumption that $\Omega \subset \R^d$ is a Lipschitz domain implies that the norm 
\[
\|\cdot\|:= \|\cdot \|_{L^\infty(\Omega)} + |\cdot |_{W^{r+1,\infty}(\Omega)}
\]
is equivalent to the classical Sobolev norm $\|\cdot\|_{W^{r+1,\infty}(\Omega)}$ on $W^{r+1,\infty}(\Omega)$. This follows from standard interpolation inequalities for Sobolev spaces, c.f. for example Corollary 13.62 in~\cite{Leoni-17-1}. 
Hence there exists an equivalence constant $C' = C'(\Omega,d,r)>0$ such that
\begin{align*}
h_{L}^{r+1} \|\cE_L f\|_{W^{r+1,\infty}(\Omega)} &\leq C'(\|\cE_L f
\|_{L^\infty(\Omega)} + |\cE_L f
|_{W_{\delta_L}^{r+1,\infty}(\Omega)}) \\ &\leq C' C |\cE_{L-1}
f|_{W_{\delta_L}^{r+1,\infty}(B_{\delta_L}(\Omega))} \\ &\leq C' C
\mu^{r+1} |\cE_{L-1}
f|_{W_{\delta_{L-1}}^{r+1,\infty}(B_{\delta_{L}}(\Omega))}. 
\end{align*}
Applying Lemma~\ref{lem:multilevel} to the right hand side of this estimate and dividing by $h_L^{r+1} = \mu^{L(r+1)} h_0^{r+1}$ yields the claim. 
\end{proof}
\end{theorem}

Note that semi-norms of $f$ occurring on the right hand sides depend
on $\nu$ over $\delta_0$ and $S_L$. However, this dependence does not influence
the convergence rate, since it occurs outside of the convergence
enforcing terms. In the case of $h_0$ being small enough such that
$\delta_0 = \nu h_0 \leq 1$, we may even estimate the weighted norm
against the classical one. In the situation of
Theorem~\ref{thm:result}, the classical MLS estimate from
Theorem~\ref{thm:classic_error} reads 
\[
\|E_{h_L} f \|_{L^\infty(\Omega)} \leq \frac{C_0}{h_0^{r+1}}\,
h_L^{r+1} |f|_{W_{\delta_0}^{r+1,\infty}(B_{\delta_L}(\Omega))}, 
\]
guaranteeing a convergence rate of at least $r+1$. Comparing this
to~\eqref{claim:result_1}, we immediately see that the factor
containing a power of $J_{L-1}(\nu,\mu)$ is the only significant
difference in the case of $L \geq 2$. Since the quantity $J_{L-1}(\mu,\nu)$ becomes arbitrarily small as $\nu \to \infty$ and $\mu \to 0$, we
have shown that the multilevel method will always accelerate the convergence if $\nu$ and $\mu$ are chosen suitably. Of course, the choice of these parameters may be subject to practical constraints. There is no difference in the case of $L=1$ since the standard MLS method and the new multilevel method coincide if the number of levels of the latter is one. 

Moreover, using a fractional Gagliardo-Nirenberg inequality of the form
\[
\|f\|_{W^{s,\infty}(\Omega)} \le C
\|f\|_{W^{r+1,\infty}(\Omega)}^{\frac{s}{r+1}}
\|f\|_{L^\infty(\Omega)}^{1-\frac{s}{r+1}}, \qquad s \in (0,r+1),
\]
which holds, for example,  if $\Omega$ is a bounded, open Lipschitz domain
(see Theorem 1 in \cite{Brezis-Mironescu-18-1}),  allows us to combine
(\ref{claim:result_1}) and (\ref{claim:result_2}) to derive the
simultaneous approximation result
\[
\Vert\cE_L f\Vert_{W^{s,\infty}(\Omega)} \leq \frac{CC_0}{h_0^{r+1}}
\bigl(J_{L-1}(\nu,\mu)\bigr)^{L-1} h_L^{r+1-s}
|f|_{W^{r+1,k,\infty}_{\delta_0}(B_{S_L}(\Omega))}, \qquad s \in \lbrack 0, r+1 \rbrack.
\]
Comparing this  again to  the corresponding result for classic MLS,
which is essentially given by
\[
\|E_{h_L} f\|_{W^{s,\infty}(\Omega)} \le \frac{C_0}{h_0^{r+1}} h_L^{r+1-s}
\|f\|_{W_{\delta_0}^{r+1,\infty}(B_{\delta_L(\Omega)})},\qquad s \in \lbrack 0, r+1\rbrack
\]
after applying a similar interpolation argument (see for example \cite{Mirzaei-15-1}), we see that the multilevel MLS result does not only yield convergence for derivatives up to order $r+1$ instead of $r$, provided that $J_{L-1}(\nu,\mu)<1$, but also provides improved convergence  for derivative information up to order $r$.

Before we turn to the proofs, let us briefly discuss how many orders
of convergence can be gained via the multilevel method according to
our results. To this end, we introduce the ratio 
\[
\kappa := \nu^{k-m}\slash\mu^{\theta_1}
\]
which measures how the quantities $\nu^{k-m}$ and $\mu^{\theta_1}$ are related
to each other, noting also that
$\rho=\mu^{\theta_1}(1+\kappa)$. If $\eta>0$ is chosen such that 
$\cC(1+\kappa)\mu^{\eta}=1$, the convergence improving term can be
bounded by 
\[
\bigl(J_{L-1}(\nu,\mu)\bigr)^{L-1} \leq (\cC\rho)^{L-1} = \mu^{L(1-\frac{1}{L})(\theta_1-\eta)},
\]  
which means that $L$ multilevel steps reduce the error at least with order 
\[
\sigma = \sigma_L := r+1+\bigl(1-\frac{1}{L}\bigr)(\theta_1-\eta).
\]
Operating under the assumption that $\eta $ is small and $L$ is large,
this means that a gain of almost $\theta_1 = \min\{r+1,k-r-1\}$ orders
can be achieved compared to the standard MLS method, essentially
doubling the convergence rate in the case of $k=2r+2$. If $k \geq
L(r+1)$, we can obtain a significantly sharper bound for
$J_{L-1}(\nu,\mu)$. 

\begin{lemma}\label{lem:convergence_gain}
Let $L \geq 2$ and suppose $k \geq L(r+1)$. Then the estimate
\begin{equation}\label{claim:convergence_gain}
J_{L-1}(\nu,\mu) \leq
\min\biggl\{1,\biggl\lbrack\frac{\mu^{\frac{1}{2}(L-2)(L-1)(r+1)}}{(1+\kappa)^{L-1}} +  \biggl(\frac{\kappa}{1+\kappa}\biggr)\biggl(\frac{2-\mu}{1-\mu}\biggr)\biggr\rbrack^{1\slash(L-1)}\biggr\} \cdot \cC\rho.
\end{equation}
holds.
\end{lemma}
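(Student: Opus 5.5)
The plan is to read off the values of the $\theta_j$ under the hypothesis $k\ge L(r+1)$, factor $\rho^{L-1}$ out of the bracket in the definition \eqref{def:J_L} of $J_{L-1}(\nu,\mu)$, and bound the remaining normalized terms one at a time. The first entry of the minimum, $\cC\rho$, is exactly the ``$1$'' in \eqref{claim:convergence_gain}. So it suffices to show
\[
\Bigl(\mu^{\theta_1+\cdots+\theta_{L-1}} + \nu^{k-m}\sum_{j=0}^{L-2}\mu^{\theta_0+\cdots+\theta_j}\rho^{L-2-j}\Bigr)^{1/(L-1)} \le \Bigl\lbrack \tfrac{\mu^{\frac12(L-2)(L-1)(r+1)}}{(1+\kappa)^{L-1}} + \tfrac{\kappa}{1+\kappa}\cdot\tfrac{2-\mu}{1-\mu}\Bigr\rbrack^{1/(L-1)}\rho .
\]
This is equivalent to showing that the bracket on the left, divided by $\rho^{L-1}$, is at most the bracket on the right.

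\textbf{Step 1 (the $\theta_j$).} For $0\le j\le L-1$ we have $(j+1)(r+1)\le L(r+1)\le k$, hence $j(r+1)\le k-r-1$ and $\theta_j=j(r+1)$. In particular $\theta_1=r+1$, so $\rho=\mu^{r+1}(1+\kappa)$ and $\nu^{k-m}=\kappa\mu^{r+1}$. Moreover $\theta_0+\cdots+\theta_j=\tfrac12 j(j+1)(r+1)$.

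\textbf{Step 2 (first term).} Dividing gives
\[
\mu^{\frac12(L-1)L(r+1)}\big/\bigl(\mu^{(L-1)(r+1)}(1+\kappa)^{L-1}\bigr)=\mu^{\frac12(L-1)(L-2)(r+1)}/(1+\kappa)^{L-1},
\]
which is exactly the first summand on the right.

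\textbf{Step 3 (the sum).} The $j$-th term divided by $\rho^{L-1}$ equals
\[
\kappa\mu^{r+1}\mu^{\frac12 j(j+1)(r+1)}\rho^{-(j+1)}=\frac{\kappa}{1+\kappa}\cdot\frac{\mu^{(r+1)\frac12 j(j-1)}}{(1+\kappa)^{j}} .
\]
I bound $(1+\kappa)^{-j}\le 1$. Since $0<\mu<1$, $r+1\ge1$ and $\tfrac12 j(j-1)\ge j-1$ for $j\ge1$, I also use $\mu^{(r+1)\frac12 j(j-1)}\le\mu^{j-1}$ for $j\ge1$. The $j=0$ term then contributes $\kappa/(1+\kappa)$. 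The terms with $j\ge1$ contribute at most $\tfrac{\kappa}{1+\kappa}\sum_{j\ge1}\mu^{j-1}=\tfrac{\kappa}{1+\kappa}\cdot\tfrac{1}{1-\mu}$. Together these give $\tfrac{\kappa}{1+\kappa}\bigl(1+\tfrac1{1-\mu}\bigr)=\tfrac{\kappa}{1+\kappa}\cdot\tfrac{2-\mu}{1-\mu}$.

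\textbf{Conclusion.} Adding Steps 2 and 3, taking the $(L-1)$-th root and multiplying by $\cC\rho$ bounds the second entry of the minimum in \eqref{def:J_L}. Since $\min\{a,b\}\le a$ and $\min\{a,b\}\le b$, and $\cC\rho$ is a common positive factor, the estimate \eqref{claim:convergence_gain} follows.

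The only delicate point is the bookkeeping in Step 1. One must check that the cap $k-r-1$ in $\theta_j$ is never active for indices up to $L-1$; this is exactly where $k\ge L(r+1)$ is used. The remaining exponent comparison in Step 3 is elementary.
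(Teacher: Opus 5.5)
Your proof is correct and follows essentially the same route as the paper: you use $k\ge L(r+1)$ to get $\theta_j=j(r+1)$ for $j\le L-1$, rewrite $\rho=(1+\kappa)\mu^{r+1}$ and $\nu^{k-m}=\kappa\mu^{r+1}$, normalize by $\rho^{L-1}$, and bound the terms $\mu^{\frac12 j(j-1)(r+1)}(1+\kappa)^{-j}$ by a geometric series. The only cosmetic difference is that you drop the factor $(1+\kappa)^{-j}$ right away, whereas the paper keeps it to obtain $1+\frac{1}{1-\mu+\kappa}$ before relaxing this to $\frac{2-\mu}{1-\mu}$.
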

A proof of this lemma can be found at the end of Section~\ref{sec:proofs}. To interpret this result, observe that if $\kappa$ is small, the map
\[
L \mapsto \min\left\{1,\left[\frac{\mu^{\frac{1}{2}(L-2)(L-1)(r+1)}}{(1+\kappa)^{L-1}} +  \left(\frac{\kappa}{1+\kappa}\right)\left(\frac{2-\mu}{1-\mu}\right)\right]^{1/(L-1)}\right\}
\]
evaluates to approximately to $1$ both at $L = 2$ and as $L \to
\infty$. For intermediate level numbers, however, it attains
significantly smaller values, which yields an additional boost in
convergence.  

Qualitatively, this suggests the following worst-case behavior: for very small $L$, only a reduction rate of around $\sigma$ can be guaranteed. The speed then gradually increases over the first few levels. Because the right-hand side of estimate~\eqref{claim:convergence_gain} converges to $\cC\rho$ as $L \to \infty$ (and since~\eqref{claim:convergence_gain} is strictly valid only for finite $L$), the convergence rate eventually slows down and plateaus around $\sigma$ once again. This phenomenon becomes increasingly pronounced as the ratio $\kappa$ decreases.

Of course, applying the multilevel scheme may also be detrimental to
the convergence speed if $\theta_1 - \eta < 0$, which, at least
theoretically, can always be avoided if $\mu$ is sufficiently
small and $k > r+1$. Furthermore, the abstract constant $\cC$ from
Theorem~\ref{thm:main} generally grows in the smoothness parameters
$k$ and $m$. To obtain exact rates, one would have to treat the
constants occurring in our error estimates more carefully and obtain
concrete bounds for $\eta$. 

\subsection{Proofs of the Error Bounds}\label{sec:proofs}

The aim of this subsection is to establish proofs for
Theorem~\ref{thm:main} and Lemmas~\ref{lem:multilevel} and
\ref{lem:convergence_gain}. To this end, 
we decompose the error into three components and apply the 
triangle inequality. Let us introduce an auxiliary function that will
occur frequently in the subsequent analysis. For $x,y \in
\mathbb{R}^d$ and multi-indices 
$\gamma \in \mathbb{N}_0^d$ with $|\gamma|\leq m$, we define
\[
G_\gamma(y,x)
\defg
\sum_{j=1}^M
D_x^\gamma
\biggl[
\Phi\!\biggl( \frac{x-y}{\delta} \biggr)
p_j\!\biggl( \frac{x-y}{\delta} \biggr)
\biggr]
\lambda_j(x).
\]
For $|\gamma|\leq k$, we set
\begin{align*}
\varepsilon_\gamma^{\mathrm{prod}}(x)
&\defg
D^\gamma Q_hf(x)
-
\sum_{q \in \mathbb{Z}^d} G_\gamma(hq,x) \cR_r f(hq,x), \\
\varepsilon_\gamma^{\mathrm{quad}}(x) 
&\defg
\sum_{q \in \mathbb{Z}^d} G_\gamma(hq,x) \cR_r f(hq,x)
-
Q_hD^\gamma f (x),
\end{align*}
where the Taylor remainder $\cR_r f$ or order $r+1$ is defined as
in~\eqref{taylor_residue_basic}. With this notation, the error can be
written as 
\[
D^\gamma E_h f(x)
=
E_h D^\gamma f(x)
-
\varepsilon_\gamma^{\mathrm{quad}}(x)
-
\varepsilon_\gamma^{\mathrm{prod}}(x).
\]
Note that $G_0(y,x) = w_\delta(y,x)$ and 
\[
\sum_{q \in \mathbb{Z}^d} w_\delta(hq, x) D^\gamma f(hq) = Q_h D^\gamma f(x).
\]

The guiding principle behind this decomposition is to successively isolate terms that decay in $\nu$, up to weighted semi-norms of $f$, and can therefore be absorbed into the right-hand side of~\eqref{claim:main} without altering the overall estimate.
The terminology reflects the nature of each contribution. The term
$\varepsilon_\gamma^{\mathrm{prod}}$ arises from the multivariate
product rule, while $\varepsilon_\gamma^{\mathrm{quad}}$ essentially
denotes a quadrature error and can be estimated using
Proposition~\ref{prop:rav}. We begin by examining
$\varepsilon_\gamma^{\mathrm{prod}}$. Since the weight functions $w_q$
admit the representation 
\begin{equation}\label{weightfct_recall}
w_q = w_\delta(hq,\cdot) = \sum_{j=1}^M \Phi\!\biggl(
\frac{\cdot-hq}{\delta} \biggr)p_j\!\biggl( \frac{\cdot-hq}{\delta}
\biggr) 
\lambda_j
\end{equation}
with $\lambda_j(x) = e_j^T A(x)^{-1} \omega_0$, differentiation of
$w_q$ produces two types of terms. Those involving derivatives of
$\lambda_j$ decay rapidly in $\nu$ by~\eqref{lambdabound2},
while the remaining contributions are collected in $G_\gamma$. We will
see this concretely in the next result.  

\begin{lemma}\label{lem:E1}
For every multi-index $\gamma \in \mathbb{N}_0^d$ with
$r+1 \leq |\gamma |\leq k$, there exists a constant
$C_{1,\gamma} = C_{1,\gamma}(\cT, k)>0$ such that, for all
$x \in \mathbb{R}^d$,
\begin{equation}
|\varepsilon_\gamma^{\mathrm{prod}}(x) |
\leq
C_{1,\gamma}\,\delta^{-|\gamma|}\,\nu^{|\gamma|-m}\,|f |_{W^{r+1,\infty}_\delta(B_\delta(x))}.
\end{equation}
\begin{proof}
By Lemma~\ref{lem:weightfct_mixed_derivatives}, we may replace point evaluations of $f$ by the local
remainder, obtaining
\[
D^\gamma Q_hf(x) = 
\sum_{q \in \mathbb{Z}^d} D^\gamma w_q(x)\, f(hq)
=
\sum_{q \in \mathbb{Z}^d} D^\gamma w_q(x)\, \cR_r f(hq,x).
\]
Consequently,
\begin{align*}
\varepsilon_\gamma^{\mathrm{prod}}(x) 
&=
\sum_{q \in \mathbb{Z}^d \,:\, hq \in B_\delta(x)}
\biggl(
D^\gamma w_q(x)
-
G_\gamma(hq,x)
\biggr)
\cR_r f(hq,x).
\end{align*}
We first show that there exists a constant
$c_\gamma = c_\gamma(\cT, k)>0$ such that, for every
$q \in \mathbb{Z}^d$,
\begin{equation}\label{E1_preliminary_est_1}
\bigl|D^\gamma w_q(x) - G_\gamma(hq,x) \bigr|\leq c_\gamma\,\delta^{-|\gamma|}\,\nu^{|\gamma|-m-d}.
\end{equation}
To this end, we use~\eqref{weightfct_recall} and apply the product rule, which yields 
\begin{align*}
D^\gamma w_q(x) 
&= 
\sum_{j=1}^M \sum_{\beta \leq \gamma}\binom{\gamma}{\beta}D^{\gamma-\beta}\!\biggl[
\Phi\!\biggl(\frac{x-hq}{\delta}\biggr) p_j\!\biggl(\frac{x-hq}{\delta}\biggr)
\biggr]
D^\beta \lambda_j(x) \\
&= 
G_\gamma(x,hq) +  \sum_{0<\beta \leq \gamma}\binom{\gamma}{\beta}\sum_{j=1}^M D^{\gamma-\beta}\!\biggl[
\Phi\!\biggl(\frac{x-hq}{\delta}\biggr) p_j\!\biggl(\frac{x-hq}{\delta}\biggr)\biggr]D^\beta \lambda_j(x).
\end{align*}
The term $G_\gamma(hq,x)$ cancels in~\eqref{E1_preliminary_est_1}. Using~\eqref{lambdabound2} to bound the derivatives of $\lambda$ and estimate the remaining sum, we obtain
\[
\bigl|D^\gamma w_q(x) - G_\gamma(hq,x) \bigr|\leq  \sum_{0<\beta \leq \gamma}\binom{\gamma}{\beta} \sum_{j=1}^M \delta^{-|\gamma-\beta|}|\Phi p_j |_{W^{k,\infty}(\R^d)} \cdot\|\omega_0\|\wt{C}_\beta \delta^{-|\beta|} \nu^{|\beta|-m-d},
\]
which implies~\eqref{E1_preliminary_est_1}. Finally, using that $\nu^{-d} |X_h \cap B_\delta(x)|$ is bounded by a constant (c.f.~\eqref{bound_for_index_set}) and the integral form of the Taylor remainder, there exists a constant $c' = c'(d,r)>0$ such that
\begin{equation}\label{E1_preliminary_est_2}
\sum_{q \in \mathbb{Z}^d \,:\, hq \in B_\delta(x)}
|\cR_r f(hq,x) |
\leq
c'\, \nu^d
|f |_{W^{r+1,\infty}_\delta(B_\delta(x))}.
\end{equation}
Combining~\eqref{E1_preliminary_est_1} and~\eqref{E1_preliminary_est_2} completes the proof.
\end{proof}
\end{lemma}

We continue with the term $\varepsilon_\gamma^{\mathrm{quad}}$, which represents a quadrature error. Before we proceed, let us motivate why this is the case. Note that for any $r+1\leq|\gamma|\leq k$, repeated integration by parts yields 
\begin{align*}
\int_{\mathbb{R}^d} G_\gamma(y,x)\cR_r f(y,x)\,dy
&= \int_{\mathbb{R}^d} \sum_{j=1}^M
   D_x^\gamma \!\left[
      \Phi\!\left(\frac{x-y}{\delta}\right)
      p_j\!\left(\frac{x-y}{\delta}\right)
   \right] \lambda_j(x)\, \cR_r f(y,x)\,dy \\
&= (-1)^{|\gamma|}
   \int_{\mathbb{R}^d} \sum_{j=1}^M
   D_y^\gamma \!\left[
      \Phi\!\left(\frac{x-y}{\delta}\right)
      p_j\!\left(\frac{x-y}{\delta}\right)
   \right] \lambda_j(x)\, \cR_r f(y,x)\,dy \\
&= \int_{\mathbb{R}^d} w_\delta(y,x) D_y^\gamma \cR_r f(y,x)\,dy 
= \int_{\mathbb{R}^d} w_\delta(y,x) D^\gamma f(y)\,dy
\end{align*}
and therefore
\begin{align}\label{quadrature_split}
\begin{split}
\bigl|\epsilon_\gamma^{\mathrm{quad}}(x)\bigr|
&\leq 
\biggl|\sum_{q \in \mathbb{Z}^d} G_\gamma(hq,x) \cR_r f(hq,x) - \frac{1}{h^d}\int_{\mathbb{R}^d} G_\gamma(y,x)\cR_r f(y,x)\,dy \biggr|\\ 
&+ 
\biggl|\sum_{q \in \mathbb{Z}^d} G_0(hq,x) D^\gamma f(hq) - \frac{1}{h^d}\int_{\R^d} G_0(y,x) D^\gamma f(y)\,dy\biggr|.
\end{split}
\end{align}
Hence we may bound $\epsilon_\gamma^{\mathrm{quad}}$ by estimating two quadrature errors. This is again achieved using Proposition~\ref{prop:rav}. We will see that $\varepsilon_\gamma^{\mathrm{quad}}$ also exhibits a favorable behavior as $\nu \to \infty$. 

\begin{lemma}\label{lem:E2}
Let $\gamma \in \mathbb{N}_0^d$ be a multi-index with $r+1 \leq |\gamma|\leq k$. Then there exists a constant $C_{2,\gamma} = C_{2,\gamma}(\cT, k)>0$
such that, for every $x \in \mathbb{R}^d$, 
\begin{equation}
\bigl|\epsilon_\gamma^{\mathrm{quad}}(x)\bigr|
\leq C_{2,\gamma}\, \delta^{-|\gamma|}\, \bigl( \nu^{|\gamma|-m}\,
      |f|_{W^{r+1,k,\infty}_\delta(B_\delta(x))} + |f|_{W^{\min\{|\gamma|+r+1,k\},\infty}_\delta(B_\delta(x))} \bigr).
\end{equation}
\begin{proof}
During this proof, $C > 0$ will denote a generic constant which may change from line to line and depends on $d$, $k$ and $r$. Furthermore, suppose $\gamma$ is fixed and set 
\[
k' := \min\{|\gamma|+r+1,k\}.
\]
We want to estimate both expressions in~\eqref{quadrature_split} using Proposition~\ref{prop:rav}. Let us focus in on the second term first: Taylor expanding $D^\gamma f$ around the point $x \in \R^d$, we obtain
\begin{align*}
\biggl|\sum_{q \in \mathbb{Z}^d} G_0(hq,x) &D^\gamma f(hq) - \frac{1}{h^d}\int_{\R^d} G_0(y,x) D^\gamma f(y)\,dy \biggr| \leq \\ 
&\leq 
\sum_{|\beta|< k'-|\gamma|} \frac{|D^{\gamma+\beta} f(x)|}{\beta!}\biggl|\sum_{q\in\Z^d} G_0(hq,x)(hq-x)^\beta - \frac{1}{h^d}\int_{\R^d} G_0(y,x)(y-x)^\beta\,dy\biggr|\\ 
&+ 
C\delta^{-|\gamma|} |f|_{W_\delta^{k',\infty}(B_\delta(x))} \biggl(\sum_{q\in\mathbb{Z}^d} |G_0(hq,x)|+ \frac{1}{h^d}\int_{\R^d} |G_0(y,x)|\,dy\biggr),
\end{align*}
where the first sum is empty (and hence equal to $0$) if $|\gamma|=k'=k$. Turning to the first term in~\eqref{quadrature_split}, we may rewrite the local remainder $\cR_r f(\cdot,x)$ as
\begin{equation}\label{double_taylor}
\cR_r f(y,x) = \sum_{r<|\beta|< k'} \frac{D^\beta f(x)}{\beta!}(y-x)^\beta + \cR_{k'-1} f(y,x)
\end{equation}
to conclude the estimate
\begin{align*}
\biggl|\sum_{q \in \mathbb{Z}^d} G_\gamma(hq,x) &\cR_r f(hq,x) - \frac{1}{h^d}\int_{\mathbb{R}^d} G_\gamma(y,x)\cR_r f(y,x)\,dy \biggr|
 \\ 
&\leq 
\sum_{r<|\beta|< k'} \frac{|D^\beta f(x)|}{\beta !} \biggl|\sum_{q\in\Z^d} G_\gamma(hq,x)(hq-x)^\beta - \frac{1}{h^d}\int_{\R^d} G_\gamma(y,x)(y-x)^\beta\,dy\biggr|\\ 
&+ 
C |f|_{W^{k',\infty}_\delta(B_\delta(x))}\biggl(\sum_{q\in\Z^d} |G_\gamma(hq,x)|+ \frac{1}{h^d}\int_{\R^d} |G_\gamma(y,x)|\ \,dy\biggr).
\end{align*}
Since $\nu^{-d}|X_h \cap B_\delta(x)|$ is bounded by a constant and
$
|\lambda_j(x)|\leq \tau_*^{-1} \|\omega_0\|\nu^{-d}
$
due to~\eqref{lambdabound1}, it is easy to see that for all
$|\gamma|\leq k$ there exists a constant $c_\gamma = c_\gamma(\cT)$
such that 
\[
\sum_{q\in\mathbb{Z}^d} |G_\gamma(hq,x)|+ \frac{1}{h^d}\int_{\R^d} |G_\gamma(y,x)|\,dy \leq c_\gamma \delta^{-|\gamma|}.
\]
Setting $F_{\beta,\gamma}(\cdot,x) := G_\gamma(\cdot,x)(\cdot-x)^\beta$ for multi-indices $\gamma$, $\beta$ with $|\gamma|\leq k$, $|\beta|\leq k'$ it therefore suffices to estimate the quadrature error
\[
\biggl|\sum_{q\in\Z^d} F_{\beta,\gamma}(hq,x) - \frac{1}{h^d}\int_{\R^d} F_{\beta,\gamma}(y,x)\,dy\biggr|
\]
to bound both terms in~\eqref{quadrature_split}. Since $\Phi$ is compactly supported,
\[
F_{\beta,\gamma}(\cdot,x) \in C_c^{m-|\gamma|}(\R^d).
\]
We may therefore apply Proposition~\ref{prop:rav} to obtain
\begin{align*}
|\epsilon_\gamma^{\mathrm{quad}}(x)|
&\leq 
C^* h^{m} \nu^d \sum_{|\beta|< k'-|\gamma|} \frac{|D^{\gamma+\beta} f(x)|}{\beta!} |F_{\beta,0}(\cdot,x)|_{W^{m,\infty}(\R^d)} + c_0 \,C\delta^{-|\gamma|} |f|_{W_\delta^{k',\infty}(B_\delta(x))} \\
&+ 
C^* h^{m-|\gamma|} \nu^d \sum_{r<|\beta|\leq k'} \frac{|D^\beta f(x)|}{\beta !} |F_{\beta,\gamma}(\cdot,x)|_{W^{m-|\gamma|,\infty}(\R^d)} + c_\gamma\,C \delta^{-|\gamma|} |f|_{W_\delta^{k',\infty}(B_\delta(x))}.
\end{align*}
It remains to show that there exists a family of constants
$C_{\beta,\gamma} = C_{\beta,\gamma}(\cT)>0$ such that
\begin{equation}\label{claim:F_bg}
\bigl|F_{\beta,\gamma}(\cdot,x) \bigr|_{W^{m-|\gamma|,\infty}(\mathbb{R}^d)}
\leq C_{\beta,\gamma}\, \delta^{|\beta|-m}\, \nu^{-d}.
\end{equation}
To this end, let $\xi \in \mathbb{N}_0^d$ with $|\xi|= m-|\gamma|$. We differentiate with respect to $y$, yielding
\begin{align}\label{E2_F_est} 
\begin{split}
|D_y^\xi F_{\beta,\gamma}(y,x)|
&= \biggl|
   \sum_{\chi \leq \xi} \binom{\xi}{\chi}
   \sum_{j=1}^M D_y^{\gamma+\xi-\chi}
   \!\left[
      \Phi\!\left(\frac{x-y}{\delta}\right)
      p_j\!\left(\frac{x-y}{\delta}\right)
   \right]
   \lambda_j(x)\,
   D_y^\chi \lbrack(y-x)^\beta\rbrack
   \biggr|\\
&\leq \wt{C}\, \nu^{-d}
   \sum_{\chi \leq \xi} \binom{\xi}{\chi}
   \delta^{-|\gamma+\xi-\chi|}
   |D_y^\chi \lbrack(y-x)^\beta\rbrack|,
\end{split}
\end{align}
where we used (\ref{lambdabound1}) to bound $\lambda_j$ and set
\[
\wt{C} := \frac{\|\omega_0\|}{\tau_*}
   \biggl( \sum_{j=1}^M
   \|\Phi\, p_j \|_{W^{m,\infty}(B_1(0))} \biggr).
\]
Since $y \in B_\delta(x)$, we have
\[
|D_y^{\chi}(y-x)^\beta|
\leq C\, \delta^{|\beta-\chi|}.
\]
Plugging this back into~\eqref{E2_F_est} yields~\eqref{claim:F_bg}, finishing the proof.
\end{proof}
\end{lemma}

We are now in a position to compile our results and prove Theorem~\ref{thm:main} as well as Lemma~\ref{lem:multilevel}. To show Theorem~\ref{thm:main}, we combine Lemmas~\ref{lem:E1} and \ref{lem:E2} as well as the classical MLS error estimate from Theorem~\ref{thm:classic_error}.
\bigskip

\noindent\textit{Proof of Theorem~\ref{thm:main}.}
Let $x \in \mathbb{R}^d$ and let $\gamma \in \mathbb{N}_0^d$ satisfy
$r+1 \leq |\gamma|\leq k$. Then
\[
\delta^{|\gamma|} \bigl|D^\gamma E_h f(x) \bigr|
\leq
\delta^{|\gamma|} \bigl|\epsilon^\mathrm{prod}_\gamma(x) \bigr|
+ \delta^{|\gamma|} \bigl|\epsilon^\mathrm{quad}_\gamma(x) \bigr|
+ \delta^{|\gamma|} \bigl|E_h D^\gamma f(x) \bigr|.
\]
Let us treat the two leading terms first. By Lemma~\ref{lem:E1} and Lemma~\ref{lem:E2}, 
\begin{align*}
  \begin{split}
    \delta^{|\gamma|} \bigl|\epsilon^\mathrm{prod}_\gamma(x) \bigr|
+ \delta^{|\gamma|} \bigl|\epsilon^\mathrm{quad}_\gamma(x) \bigr| \leq&
\bigl(C_{1,\gamma} + C_{2,\gamma}\bigr)\, \nu^{|\gamma|-m}\, 
|f|_{W^{r+1,k,\infty}_\delta(B_\delta(x))}
\\ 
&+ 
C_{2,\gamma}\, |f|_{W^{\min\{|\gamma|+r+1,k\},\infty}_\delta(B_\delta(x))}.
\end{split}
\end{align*}
Next, estimating the classical MLS error $E_h D^\gamma f(x)$ via Theorem~\ref{thm:classic_error}, we find
\[
\delta^{|\gamma|} |E_h D^\gamma f(x) |\leq C\delta^{|\gamma|} |D^\gamma f |_{W_\delta^{\min\{r+1,k-|\gamma|\},\infty}(B_\delta(x))} \leq C\, |f |_{W_\delta^{\min\{|\gamma|+r+1,k\},\infty}(B_\delta(x))}.
\]
Taking the maximum over all $|\gamma|= n$ yields the desired estimate. \qed
\bigskip

Next, we give a proof for Lemma~\ref{lem:multilevel} using an
iterative argument based on Theorem~\ref{thm:main}. The reader may
want to recall the definitions of the families $(R_j)_{0\leq j\leq L}$
and $(\theta_j)_{0\leq j\leq L}$, where $R_0 > 0$ is arbitrary, $\theta_0=0$ and 
\[
R_j := R_{j-1} + \delta_{L-j+1} \qquad \text{and} \qquad \theta_{j} :=
\min\{j(r+1), k - r - 1\} 
\]
for $1 \leq j \leq L$, since these frequently occur during the proof. Furthermore, $C$ denotes the constant from Theorem~\ref{thm:main}.
\bigskip

\noindent\textit{Proof of Lemma~\ref{lem:multilevel}.}
We show~\eqref{true_recursion} using an iterative argument based on Theorem~\ref{thm:main}. Setting $\zeta_j := \theta_j + r + 1$, we introduce 
\[
\sigma_j(x) := |\cE_{L-j} f |_{W^{\zeta_j,\infty}_{\delta_{L-j}}(B_{R_j}(x))} \qquad \text{and} \qquad \xi_j(x) := \cC \nu^{k-m} |\cE_{L-j}f |_{W_{\delta_{L-j}}^{r+1,k,\infty}(B_{R_j}(x))}
\]
for $0 \leq j \leq L$ and claim that
\begin{equation}\label{est:recursion}
\sigma_j(x) \leq \mu^{r+1} \xi_{j+1}(x) + \cC\,\mu^{\zeta_{j+1}} \sigma_{j+1}(x),\, \quad 0 \leq j \leq L-1.
\end{equation}
Indeed, as an immediate consequence of estimate~\eqref{claim:main} and the scaling relations between weighted Sobolev norms, we have
\begin{align*} 
\sigma_j(x) &= |\cE_{L-j} f |_{W^{\zeta_j,\infty}_{\delta_{L-j}}(B_{R_j}(x))} = |E_{h_{L-j}} \cE_{L-j-1} f |_{W^{\zeta_j,\infty}_{\delta_{L-j}}(B_{R_j}(x))}  
\\
&\leq
\cC\,\bigl(\nu^{k-m} |\cE_{L-j-1} f|_{W^{r+1,k,\infty}_{\delta_{L-j}}(B_{R_j + \delta_{L-j}}(x))} \\ &\qquad\qquad + |\cE_{L-j-1} f|_{W^{\min\{\zeta_j+r+1,k\},\infty}_{\delta_{L-j}}(B_{R_j+\delta_{L-j}}(x))} \bigr) 
\\
&\leq 
\cC\,\bigl(\nu^{k-m} \mu^{r+1} |\cE_{L-j-1} f|_{W^{r+1,k,\infty}_{\delta_{L-j-1}}(B_{R_{j+1}}(x))} \\ &\qquad\qquad + \mu^{\min\{\zeta_j+r+1,k\}} |\cE_{L-j-1}f|_{W^{\min\{\zeta_j+r+1,k\},\infty}_{\delta_{L-j-1}}(B_{R_{j+1}}(x))}\bigr)
\\ & =
\mu^{r+1} \xi_{j+1}(x) + \cC\,\mu^{\zeta_{j+1}} \sigma_{j+1}(x).
\end{align*}
Iterating~\eqref{est:recursion}, we obtain with $\zeta_0=r+1$,
\begin{equation}\label{recursion_1st_part}
\sigma_0(x) \leq \sum_{j=0}^{L-1} \cC^j \mu^{\zeta_0+\cdots+\zeta_j}
\xi_{j+1}(x) + \cC^L \mu^{\zeta_1+\cdots+\zeta_L} \sigma_L(x). 
\end{equation}
By~\eqref{claim:main_secondary} and using
$\rho=\nu^{k-m}+\mu^{\theta_1}$, it follows for all $1 \leq j \leq L-1$ that
\begin{align*}
\xi_j(x) &= \cC \nu^{k-m} |\cE_{L-j}f |_{W_{\delta_{L-j}}^{r+1,k,\infty}(B_{R_j}(x))}
\\ &\leq \cC^2 \nu^{k-m} \bigl(\nu^{k-m}\mu^{r+1} +
\mu^{\min\{2r+2,k\}}\bigr)
\,|\cE_{L-j-1}f|_{W_{\delta_{L-j-1}}^{r+1,k,\infty}(B_{R_{j+1}}(x))} 
\\ &= \cC\,\mu^{r+1}\rho\,\xi_{j+1}(x)
=\cC^{L-j} \mu^{(L-j)(r+1)} \rho^{L-j} \xi_L(x).
\end{align*}
Inserting this into~\eqref{recursion_1st_part}, one sees that
\begin{align*}
\begin{split}
\sigma_0(x) &\leq  \sum_{j=0}^{L-1} \cC^j
\mu^{\zeta_0+\cdots+\zeta_j}
\cC^{L-j-1}(\mu^{r+1}\rho)^{L-j-1}\xi_{L}(x) + \cC^L
\mu^{\zeta_1+\cdots+\zeta_L} \sigma_L(x) 
\\ &= \cC^{L-1}\biggl(\sum_{j=0}^{L-1} \mu^{\zeta_0+\cdots+\zeta_j}
(\mu^{r+1}\rho)^{L-j-1}\biggr) \xi_{L}(x)+ \cC^L
\mu^{\zeta_1+\cdots+\zeta_L} \sigma_L(x) 
\\ &= \cC^{L-1} \mu^{L(r+1)} \biggl(\sum_{j=0}^{L-1}
\mu^{(\zeta_0-\zeta_0)+\cdots+(\zeta_j-\zeta_0)}
\rho^{L-j-1}\biggr)\xi_L(x) + \cC^L \mu^{L(r+1)}
\mu^{(\zeta_1-\zeta_0)+\cdots+(\zeta_L-\zeta_0)} \sigma_L(x). 
\end{split}
\end{align*}
Since $\zeta_j - \zeta_0 = \theta_j$, we may substitute $\sigma_0(x)$, $\xi_L(x)$ by their definitions and bound $\sigma_L(x)$ from above via
\[
\sigma_L(x) = |f |_{W^{\zeta_L,\infty}_{\delta_{0}}(B_{R_L}(x))} \leq |f |_{W^{r+1,k,\infty}_{\delta_{0}}(B_{R_L}(x))}
\]
to finally obtain~\eqref{true_recursion}. For~\eqref{true_recursion_simplified}, simply observe that
\[
\cC \nu^{k-m} \sigma_0(x) \leq \xi_0(x) \leq \cC^L \mu^{L(r+1)} \rho^L \xi_L(x) = \cC^L \mu^{L(r+1)} \rho^L \cdot \cC\nu^{k-m} |f|_{W_{\delta_0}^{r+1,k,\infty}(B_{R_L}(x))}.
\]
Dividing both sides by $\cC \nu^{k-m}$ yields the claim. \qed
\bigskip
\noindent

Before we conclude this section, we still need to give a proof for
Lemma~\ref{lem:convergence_gain}, in which we studied the convergence
gaining term $J_L(\nu,\mu)$ from~\eqref{def:J_L} more closely. Here,
$C>0$ denotes the constant from Theorem~\ref{thm:main} again. 
\bigskip

\noindent\textit{Proof of Lemma~\ref{lem:convergence_gain}.}
By definition, $\rho = (1+\kappa)\mu^{\theta_1}$ and $\theta_1 =
r+1$. Substituting this, leads to
\[
S := \nu^{k-m}\sum_{j=0}^{L-2}\mu^{\theta_0+\dots+\theta_j} \rho^{L-j-2} = \kappa(1+\kappa)^{L-2}\mu^{(L-1)\theta_1} \sum_{j=0}^{L-2} T_j,
\]
where $T_j := \mu^{\theta_0+\dots+\theta_j-j\theta_1}(1+\kappa)^{-j}$. Since $k \geq L(r+1)$, we have $T_j = \mu^{\frac{1}{2}j(j-1)(r+1)}(1+\kappa)^{-j}$. Shifting the summation index by one and using the inequality $\frac{1}{2}(r+1)(j+1)j \geq j$, we can bound the sum via
\[
\sum_{j=0}^{L-2} T_j \leq 1 + \frac{1}{1+\kappa} \sum_{j=0}^\infty\mu^{j}(1+\kappa)^{-j} = 1 + \frac{1}{1-\mu+\kappa} \leq \frac{2-\mu}{1-\mu}.
\]
Substituting this bound back into our expression for $S$ gives
\[
S \leq \left(\frac{\kappa}{1+\kappa}\right)\left(\frac{2-\mu}{1-\mu}\right) \mu^{(L-1)(r+1)}(1+\kappa)^{L-1}.
\]
The claim then follows directly from this estimate, after factoring out $\cC\rho = \cC (1+\kappa) \mu^{r+1}$ from
\[
J_{L-1}(\nu,\mu) = \min\{\cC\rho,\, \cC(\mu^{\frac{1}{2}L(L-1)(r+1)} + S)^{1\slash(L-1)}\}.
\]
\qed

\section{Numerical Experiments}\label{sec:numerik}

In this section, we present some numerical experiments that validate
our theory. The convergence rate of the classical MLS method will be
compared to the convergence rate of the multilevel method. More
precisely, for every test run, a tuple $(h_0, \nu, \mu, r, L, \Phi)$
of an initial mesh width $h_0 >0$, a  ratio $\nu > 1$ between support
size and mesh width, a refinement factor $0 < \mu < 1$, a polynomial reproduction degree $r\in\mathbb{N}_0$, a number of levels $L \in \mathbb{N}$ and a kernel $\Phi : \R^d \to \R$ is chosen. 

Given a function $f: \R^d \to \R$ to be approximated, we then calculate the discrete $\ell^\infty$ norm of both the error sequences $\|E_{h_\ell} f \|_{\ell^\infty(Y_L)}$ and $\|\cE_\ell f \|_{\ell^\infty(Y_L)}$ over the set
\[
Y_L \defg \{y \in {\intervalc 0 1}^d \,:\, (4 y) \slash h_L \in \mathbb{Z}^d\},
\]
i.e. we take the maximum over all points in the cube ${\intervalc 0 1}^d$ which lie on a grid four times finer than $X_{h_L}$. The errors are plotted logarithmically, i.e. the graphs will show the map
\[
\mathfrak{E}_{SL}:\{1,\ldots,L\} \ni \ell \mapsto \log_{10}\bigl(\|E_{h_\ell} f\|_{\ell^\infty(Y_L)}\bigr) 
\]
depicting the error of the standard MLS method, as well as the multilevel error
\[
\mathfrak{E}_{ML} : \{1,\ldots,L\} \ni \ell \mapsto \log_{10}\bigl(\|\cE_\ell f \|_{\ell^\infty(Y_L)}\bigr).
\]
To calculate numerical convergence rates for each method, we calculate the two vectors
\[
\mathfrak{r}_{SL} :=
\biggl\lbrack\log_{\mu}\biggl(\frac{\|E_{h_{\ell+1}} f
  \|_{\ell^\infty(Y_L)}}{\|E_{h_\ell} f
  \|_{\ell^\infty(Y_L)}}\biggr)\biggr\rbrack_{\ell=1,\ldots,L-1} \quad
\text{and} \quad \mathfrak{r}_{ML} :=
\biggl\lbrack\log_{\mu}\biggl(\frac{\|\cE_{\ell+1} f
  \|_{\ell^\infty(Y_L)}}{\|\cE_\ell f
  \|_{\ell^\infty(Y_L)}}\biggr)\biggr\rbrack_{\ell=1,\ldots,L-1}, 
\]
which are then displayed in tables for reading. In the figures below,
$\mathfrak{E}_{SL}$ will be depicted in blue and
$\mathfrak{E}_{SL}$  in orange. For the classical MLS
method, we will always expect a convergence rate of $r+1$ if  $r$ is
odd, and a rate of $r+2$ if $r$ is even,
c.f. Theorem~\ref{thm:classic_error} and
Theorem~\ref{thm:classic_error_improved}. 

In our examples, the kernels will be chosen as Wendland functions
$\Phi_{d,k} = \phi_{d,k}(\|\cdot\|) \in C^{2k}(\R^d)$, where the
family of functions $(\phi_{d,k})_{d,k\in\N}$ is defined as in Chapter
9 of~\cite{Wendland-05-1}. We will only deal with two-dimensional
problems, i.e. $d=2$. As for the initial mesh and the refinement rate,
we will stick to $h_0 = 0.25$ and $\mu = 0.5$. The content of all
Figures was generated using the python library
matplotlib~\cite{Hunter-07-1}. The MLS algorithm and its multilevel
variant were implemented in C++. The code is made available in
repository~\cite{Durst-26-7} for reproducibility. 

\begin{example}\label{example_1}
In the remarks following Theorem~\ref{thm:result}, we noted that the
multilevel scheme can achieve nearly twice the convergence rate of the
standard MLS method if $k=2r+2$, provided the abstract constants in
our estimates are sufficiently small. Here, we explore this scenario
with the specific choice $r=1$, yielding an expected convergence rate
of $2(r+1) = 4$. Consider the function $f:\R^2 \to \R$ defined by 
\[
f(x) := \|x - (0.5,0.5)^\transpose \|^{4+\eps}\,,
\]
where $\eps = 0.01$. This function satisfies $f \in
C^{4,\eps}(\R^2)\setminus C^5(\R^2)$, and we approximate it using the
Wendland kernel $\Phi_{2,3} \in C^6(\R^2)$. The results are visualized
in Figure~\ref{fig:example_1}. As shown in
Table~\ref{tab:example_1_combined}, the multilevel scheme indeed
attains convergence rates of approximately $4$. Furthermore, a
significant speedup is observed at $L=3$ for $\nu=8.1$. 
\end{example}

\begin{example}\label{example_2}
While Example~\ref{example_1} addressed the case $k=2r+2$, our estimates suggest that the multilevel scheme attains even higher convergence orders if $k \geq L(r+1)$ (cf.\ Lemma~\ref{lem:convergence_gain}). These supplementary gains diminish as $L \to \infty$. To illustrate the efficacy of the multilevel method under optimal conditions, we approximate the smooth function $f:\R^2 \to \R$ given by
\[
f(x) = \sin(\|x\|^2)(1+\cos(\|x\|^2))\,,
\]
using the kernel $\Phi_{2,3}$ and a polynomial reproduction degree of
$r=0$. Because $r$ is even, Theorem~\ref{thm:classic_error_improved}
predicts a classical convergence rate of $r+2=2$. Naturally, this
super-convergence of the standard MLS scheme also enhances the overall
convergence of the multilevel method. The results are depicted in
Figure~\ref{fig:example_2} and
Table~\ref{tab:example_2_combined}. Following a substantial initial
speedup, the convergence rate begins to stagnate around $L=6$ for
$\nu=8.1$. 
\end{example}

\begin{figure}[H]
    \centering
    \begin{minipage}[b]{0.47\textwidth}
        \centering
        \includegraphics[width=\textwidth]{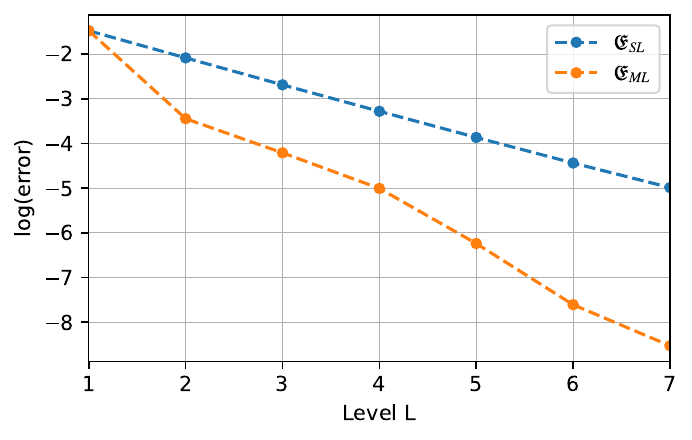}
        \label{fig:ex1_plot_a}
    \end{minipage}%
    \hfill%
    \begin{minipage}[b]{0.48\textwidth}
        \centering
        \includegraphics[width=\textwidth]{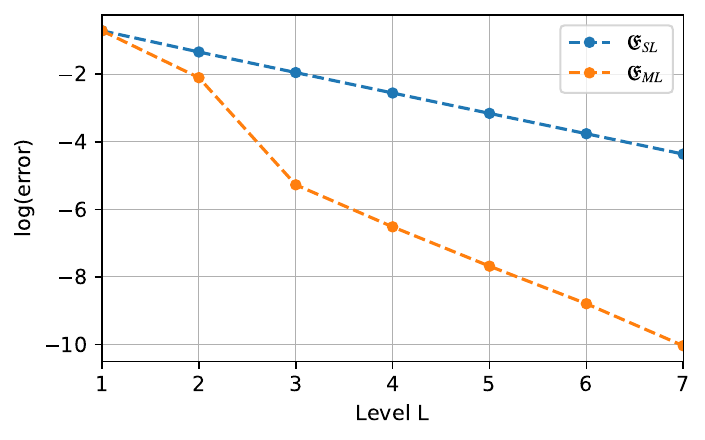}
        \label{fig:ex1_plot_b}
    \end{minipage}
    \caption{Errors $\mathfrak{E}_{SL}(\ell)$ and $\mathfrak{E}_{ML}(\ell)$ for Example~\ref{example_1} and levels $1 \leq \ell \leq L = 7$.}
    \label{fig:example_1}
\end{figure}

\vspace{0.5cm}

\begin{table}[H]
\centering
\begin{tabular}{cl @{:\quad} cccccc}
\toprule
\multicolumn{2}{c}{} & $\ell=1$ & $\ell=2$ & $\ell=3$ & $\ell=4$ & $\ell=5$ & $\ell=6$ \\
\midrule
$\nu = 3.5$ & $\mathfrak{r}_{SL}(\ell)$ & 2.0168 & 2.0042 & 2.0011  & 2.0003 & 2.0001 & 1.9981 \\
            & $\mathfrak{r}_{ML}(\ell)$ & 6.5699 & 3.2004 & 2.7497 & 4.8132 & 4.1463 & 3.0289 \\
\midrule
$\nu = 8.1$ & $\mathfrak{r}_{SL}(\ell)$ & 2.0861 & 2.0224 & 2.0056 & 2.0014 & 2.0004 & 2.0001 \\
            & $\mathfrak{r}_{ML}(\ell)$ & 4.6101 & 10.5224 & 4.1322 & 3.8686 & 3.6750 & 4.0164 \\
\bottomrule
\end{tabular}
\caption{Rates $\mathfrak{r}_{SL}$ and $\mathfrak{r}_{ML}$ for Example~\ref{example_1} under varying $\nu$ parameters.}
\label{tab:example_1_combined}
\end{table}

\vspace{2.0cm}

\begin{figure}[H]
    \centering
    \begin{minipage}[b]{0.47\textwidth}
        \centering
        \includegraphics[width=\textwidth]{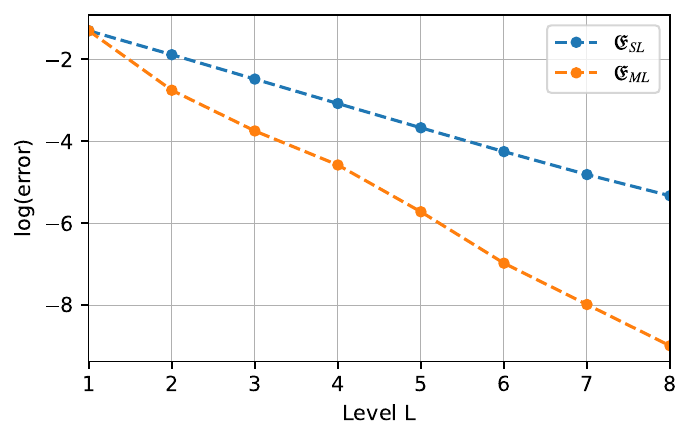}
        \label{fig:ex2_plot_a}
    \end{minipage}%
    \hfill%
    \begin{minipage}[b]{0.48\textwidth}
        \centering
        \includegraphics[width=\textwidth]{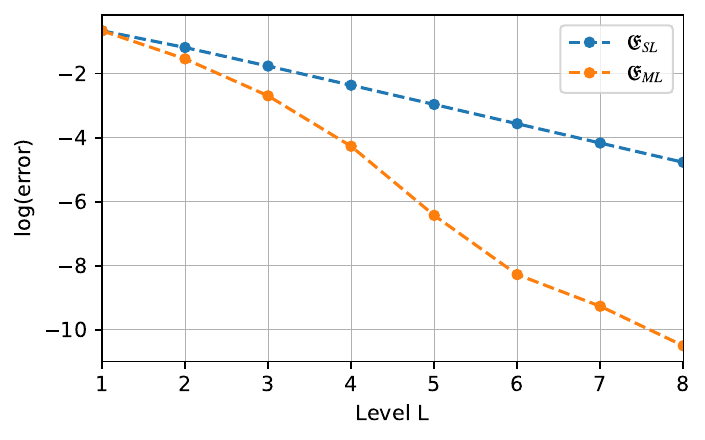}
        \label{fig:ex2_plot_b}
    \end{minipage}
    \caption{Errors $\mathfrak{E}_{SL}(\ell)$ and $\mathfrak{E}_{ML}(\ell)$ for Example~\ref{example_2} and levels $1 \leq \ell \leq L = 8$.}
    \label{fig:example_2}
\end{figure}

\vspace{0.5cm}

\begin{table}[H]
\centering
\begin{tabular}{cl @{:\quad} ccccccc}
\toprule
\multicolumn{2}{c}{} & $\ell=1$ &  $\ell=2$ & $\ell=3$ & $\ell=4$ & $\ell=5$ & $\ell=6$ & $\ell=7$ \\
\midrule
$\nu = 3.5$ & $\mathfrak{r}_{SL}(\ell)$ & 1.9425 & 1.9791 & 1.9814 & 1.9678 & 1.9342 & 1.8618 & 1.7178 \\
            & $\mathfrak{r}_{ML}(\ell)$ & 4.8245 & 3.3032 & 2.7603 & 3.7968 & 4.1690 & 3.3528 & 3.3437 \\
\midrule
$\nu = 8.1$ & $\mathfrak{r}_{SL}(\ell)$ & 1.7356 & 1.9285 & 1.9818 & 1.9954 & 1.9988 & 1.9997 & 1.9999 \\
            & $\mathfrak{r}_{ML}(\ell)$ & 2.9146 & 3.8334 & 5.2288 & 7.1642 & 6.1393 & 3.2821 & 4.0965 \\
\bottomrule
\end{tabular}
\caption{Rates $\mathfrak{r}_{SL}$ and $\mathfrak{r}_{ML}$ for Example~\ref{example_2} under varying $\nu$ parameters.}
\label{tab:example_2_combined}
\end{table}


\end{document}